\theoremstyle{plain}
\newtheorem{theorem}{Theorem}
\newtheorem{lemma}[theorem]{Lemma}
\newtheorem{ex}[theorem]{Example}
\theoremstyle{remark}
\theoremstyle{definition}
\newtheorem{definition}{Definition}
\begin{document}

\title[Permanence and uniform asymptotic stability of positive solutions]{%
Permanence and uniform asymptotic stability of positive solutions of SAIQH models on time scales}

\author[N. Zine]{Nedjoua Zine}
\address[N. Zine]{University Mustapha Stambouli of Mascara, Algeria}
\email{nadjoua.zine@univ-mascara.dz}

\author[B. Bayour]{Benaoumeur Bayour}
\address[B. Bayour]{University Mustapha Stambouli of Mascara, Algeria}
\email{b.bayour@univ-mascara.dz}

\author[D. F. M. Torres]{Delfim F. M. Torres}
\address[D. F. M. Torres]{%
Center for Research and Development in Mathematics and Applications (CIDMA),
Department of Mathematics, University of Aveiro, 3810-193 Aveiro, Portugal}
\email{delfim@ua.pt}

\address[D. F. M. Torres]{Research Center in Exact Sciences (CICE),
Faculty of Sciences and Technology (FCT),
University of Cape Verde (Uni-CV),
7943-010 Praia, Cape Verde}
\email{delfim@unicv.cv}


\begin{abstract}
A Susceptible--Asymptomatic--Infectious--Quarantined--Hospitalized (SAIQH)
compartmental model on time scales is introduced and
a suitable Lyapunov function is defined. Main results include:
the proof that the system is permanent; proof of existence of solution;
and sufficient conditions implying the dynamic system to have a unique
almost periodic solution that is uniformly asymptotically stable. 
An example is presented supporting the obtained results.
\end{abstract}

\keywords{SAIQH models;
time scales;
permanence;
almost periodic solutions;
uniform asymptotic stability.}

\subjclass[2020]{34A12; 34N05}

\date{Submitted January 31, 2024}

\maketitle

\numberwithin{equation}{section}
\allowdisplaybreaks


\section{Introduction}

In 2019, the COVID-19 pandemic has appeared for the first time in Wuhan, China,
attracting many researchers to investigate outbreaks and the spread
of viruses \cite{MyID:461,MyID:468}. Some of the studies provided
new mathematical compartmental models, illustrating well the important
contributions of Mathematics to fight communicable diseases.
Such models have been used to analyze the corresponding dynamics and to supply
useful techniques in disease epidemiology \cite{[b7],[b5],[b6]}.

Kim et al. studied a SAIQH (Susceptible--Asymptomatic--Infectious--Quarantined--Hospitalized)
mathematical model to analyze the transmission dynamics of MERS and to estimate
transmission rates \cite{[kim]}. Lemos-Pai\~{a}o et al. developed a SAIQH type model for COVID-19,
which was important to describe and understand the pandemic in Portugal \cite{[b5]}.
Similar to \cite{[b5]}, here we also consider the $H_{IC}$ class of hospitalized individuals
in intensive care units.

Let the total living population under study, denoted by $N(t)$, $t\geq 0$,
be divided into six classes: (i)~the susceptible individuals $S(t)$;
(ii)~the infected individuals without (or with mild) symptoms $A(t)$ (the  Asymptomatic);
(iii)~infected individuals $I(t)$ with visible symptoms; (iv)~quarantined individuals
$Q(t)$ in isolation at home; (v)~hospitalized individuals $H(t)$;
(vi)~and hospitalized individuals $H_{IC}(t)$ in intensive care units.
The mathematical model introduced and studied in \cite{[b5]} reads:
\begin{equation}
\label{1}
\begin{cases}
\dot{S}(t)=\Lambda+\omega nQ(t)-[\lambda(t)(1-p)+\phi p+\gamma]S(t), \\
\dot{A}(t)=\lambda(t)(1-p)S(t)-[q\nu+\gamma]A(t),\\
\dot{I}(t)=q\nu A(t)-[\delta_{1}+\gamma]I(t),\\
\dot{Q}(t)=\phi pS(t)+\delta_{1}f_{1}I(t)
+\delta_{2}(1-f_{2}-f_{3})H(t)-[\omega m+\gamma]Q(t),\\
\dot{H}(t)=\delta_{1}(1-f_{1})I(t)+\eta(1-k)H_{IC}(t)
-\left[\delta_{2}(1-f_{2}-f_{3})+\delta_{2}f_{2}+\alpha_{1}f_{3}+\gamma\right]H(t),\\
\dot{H_{IC}}(t)=\delta_{2}f_{2}H(t)-[\eta(1-k)+\alpha_{2}k+\gamma]H_{IC}(t),
\end{cases}
\end{equation}
where, for all time $t\geq 0$,
$$
\lambda(t)=\dfrac{\beta\left(l_{A}A(t)+I(t)+l_{H}H(t)\right)}{N(t)}
$$
is a bounded positive function with
$$
N(t)=S(t)+A(t)+I(t)+Q(t)+H(t)+H_{IC}(t),
$$
$\beta,l_A,l_H > 0$, and all the other parameters in the model are nonnegative.
In addition,
$$
p,1-p,k,1-k,q,f_1,1-f_1,f_2,f_3,1-f_2-f_3\in[0,1].
$$
For more details on the mathematical model \eqref{1}
we refer the reader to \cite{[b5]}.

Nowadays, the study of dynamic equations on time scales is a leading topic
of research \cite{[b1],[b0],[Martin],[Martin2],[khudd],[khudd1],[khudd2],[Tisdell],[b18]}.
Motivated by the above mentioned references and the 2019 paper of Khuddush and Prasad
on a $n$-species Lotka--Volterra system \cite{[khudd]}, here we extend  \eqref{1}
to a time scale $\mathbb{T}$, studying the permanence and uniform asymptotic stability
of the unique positive almost periodic solution of the following SAIQH
type model on time scales:
\begin{equation}
\label{sy}
\begin{cases}
S^{\Delta}(t)=\Lambda+\omega nQ(t)-[\lambda(t)(1-p)+\phi p+\gamma]S^{\sigma}(t),\\
A^{\Delta}(t)=\lambda(t)(1-p)S(t)-[q\nu+\gamma]A^{\sigma}(t),\\
I^{\Delta}(t)=q\nu A(t)-[\delta_{1}+\gamma]I^{\sigma}(t),\\
Q^{\Delta}(t)=\phi pS(t)+\delta_{1}f_{1}I(t)+\delta_{2}(1-f_{2}-f_{3})H(t)
-[\omega m+\gamma]Q^{\sigma}(t),\\
H^{\Delta}(t)=\delta_{1}(1-f_{1})I(t)+\eta(1-k)H_{IC}(t)
-\left[\delta_{2}(1-f_{2}-f_{3})+\delta_{2}f_{2}+\alpha_{1}f_{3}+\gamma\right]H^{\sigma}(t),\\
H_{IC}^{\Delta}(t)=\delta_{2}f_{2}H(t)-[\eta(1-k)+\alpha_{2}k+\gamma]H_{IC}^{\sigma}(t),
\end{cases}
\end{equation}
where all parameters are real nonnegative numbers
and $\lambda(t)$ is bounded positive,
as mentioned before. In the particular case $\mathbb{T} = \mathbb{R}^+$,
system \eqref{sy} reduces to \eqref{1}.

The text is organized as follows.
In Section~\ref{sec:2}, we recall the main
necessary notions and results needed in the sequel.
The original results are then formulated and proved in Section~\ref{sec:3}:
we show that system (\ref{sy}) is permanent (Theorem~\ref{2});
we give a sufficient condition for existence of a solution to system (\ref{sy})
(Theorem~\ref{thm:existence}); and we obtain conditions implying the dynamic system (\ref{sy})
to have a unique almost periodic solution that is uniformly asymptotically stable	
(Theorem~\ref{impth}).

For convenience, in the sequel we put
$x_{1} = S$,
$x_{2} = A$,
$x_{3} = I$,
$x_{4} = Q$,
$x_{5} = H$,
and $x_{6} = H_{IC}$.


\section{Preliminaries}
\label{sec:2}

For a function $f(t)$ defined on $t \in \mathbb{T}^{+}$, we set
$$
f^{L} := \inf \left\lbrace f(t): t \in \mathbb{T}^{+}\right\rbrace,
\; \; f^{U}:=\sup \left\lbrace f(t): \, t \in \mathbb{T}^{+}\right\rbrace.
$$

We begin by recalling some basic but fundamental concepts of the calculus on time scales \cite{[b1]}.
A time scale $\mathbb{T}$ is a nonempty closed subset of the real numbers $\mathbb{R}$;
on $\mathbb{T} $ one considers the topology inherited from the real numbers
with the standard topology. The jump operators
$ \sigma, \rho: \, \mathbb{T} \longrightarrow \mathbb{T}$ and the graininess
function $\mu: \mathbb{T}\longrightarrow \mathbb{R}^{+}$ are defined, respectively, by
$$
\sigma(t)=\inf\left\lbrace \tau \in \mathbb{T}: \tau >t\right\rbrace , \;
\rho(t)=\sup \left\lbrace \tau \in \mathbb{T}: \tau < t\right\rbrace, \;
\text{ and } \; \mu(t)= \sigma(t)-t,
$$
supplemented by  $\inf \emptyset = \sup \mathbb{T}$ and $\sup \emptyset = \inf \mathbb{T}$.
A point $t \in \mathbb{T} $ is left-dense, left-scattered, right-dense, or right-scattered
when $\rho(t)=t$, $\rho(t)<t$, $\sigma(t)=t$, and $\sigma(t)>t$, respectively.
If $\mathbb{T} $ has a left-scattered maximum $ m$, then
$\mathbb{T}^{\kappa}:=\mathbb{T}\setminus \left\lbrace m \right\rbrace$;
otherwise, $\mathbb{T}^{\kappa}:=\mathbb{T}$. If $f:\mathbb{T}\rightarrow \mathbb{R}$,
then $f^{\sigma}:\mathbb{T}\rightarrow \mathbb{R}$
is given by $f^{\sigma}(t)=f(\sigma(t))$ for all $t\in\mathbb{T}$.	

\begin{definition}
A function $f: \mathbb{T}\longrightarrow \mathbb{R}$
is called regressive provided
$1+ \mu(t)f(t)\neq 0 $ for all $t \in \mathbb{T}^{\kappa}$;
a function $g: \mathbb{T}\longrightarrow \mathbb{R}$ is called rd-continuous provided
it is continuous at right-dense points in $ \mathbb{T}$ and its left-sided
limits exist (finite) at left-dense points in $ \mathbb{T}$.
The set of all regressive and rd-continuous functions
$f: \mathbb{T}\longrightarrow \mathbb{R}$
will be denoted by $ \mathcal{R}=\mathcal{R}(\mathbb{T},\mathbb{R})$.
We define the set $\mathcal{R}^{+}= \mathcal{R}^{+}(\mathbb{T},\mathbb{R})
=\left\lbrace f \in \mathcal{R}: 1+\mu(t)f(t)>0, \; \text{for} \, \text{all}
\; t \in \mathbb{T}\right\rbrace$.	
\end{definition}

\begin{lemma}[See \cite{[b4]}]
\label{23}
Assume that $\alpha >0$, $b>0$,
and $-\alpha \in \mathcal{R}^{+}$.
If
$$
y^{\Delta}(t)\geq (\leq) \,
b-\alpha y^{\sigma}(t),
\quad y(t)>0,
\quad t \in [t_{0},\infty)_{\mathbb{T}},
$$
then
$$
y(t) \geq (\leq) \,
\dfrac{b}{\alpha} \left[ 1+\left( \dfrac{\alpha y(t_{0})}{b}
-1\right)e_{(-\alpha)}(t,t_{0})\right],
\quad t \in [t_{0}, \infty)_{\mathbb{T}},
$$
where $e_{\cdot}(\cdot,\cdot)$ is the standard exponential function
of the time-scale calculus \cite{[b1]}.
\end{lemma}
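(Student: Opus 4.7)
The plan is to treat the result as a Gronwall-type comparison on time scales, exploiting the fact that the linear dynamic equation $y^{\Delta}=b-\alpha y^{\sigma}$ admits the right-hand side of the conclusion as its unique solution with initial value $y(t_{0})$. Thus I need only show that the inequality solution dominates (respectively, is dominated by) that reference solution, and this is most cleanly done by an integrating-factor argument.

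First I would verify that $\alpha\in\mathcal{R}^{+}$: since $\alpha>0$ and $\mu(t)\geq 0$ one automatically has $1+\mu(t)\alpha>0$, so $e_{\alpha}(\cdot,t_{0})$ is defined and strictly positive on $[t_{0},\infty)_{\mathbb{T}}$. The stronger hypothesis $-\alpha\in\mathcal{R}^{+}$ is used to guarantee that the exponential $e_{-\alpha}(t,t_{0})$ appearing in the conclusion is itself positive. Next, rewriting the assumption as $y^{\Delta}(t)+\alpha y^{\sigma}(t)\geq b$ and multiplying by $e_{\alpha}(t,t_{0})>0$, I would apply the time-scale product rule in the form $(fg)^{\Delta}=f^{\Delta}g+f^{\sigma}g^{\Delta}$ with $f=y$ and $g=e_{\alpha}(\cdot,t_{0})$ to recognise
\[
\bigl(y(t)\,e_{\alpha}(t,t_{0})\bigr)^{\Delta}\;\geq\;b\,e_{\alpha}(t,t_{0}).
\]

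Integrating this from $t_{0}$ to $t$ and using the elementary identity $\int_{t_{0}}^{t}e_{\alpha}(s,t_{0})\Delta s=\alpha^{-1}\bigl(e_{\alpha}(t,t_{0})-1\bigr)$, which holds because $\alpha$ is a constant, produces $y(t)\,e_{\alpha}(t,t_{0})\geq y(t_{0})+\frac{b}{\alpha}(e_{\alpha}(t,t_{0})-1)$. Dividing by the positive quantity $e_{\alpha}(t,t_{0})$ and rearranging yields the announced bound, after identifying $1/e_{\alpha}(t,t_{0})$ with the exponential denoted $e_{-\alpha}(t,t_{0})$ in the statement; the ``$\leq$'' branch is proved verbatim with every inequality reversed. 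The only real point needing care is the choice of the product-rule form whose $f^{\sigma}$ factor absorbs the $y^{\sigma}$ already present in the hypothesis, and the direction-preserving effect of dividing by $e_{\alpha}>0$; no deeper obstacle is present, and everything else is routine time-scale calculus from \cite{[b1]}.
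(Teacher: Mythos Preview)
The paper does not supply its own proof of this lemma; it is quoted verbatim from \cite{[b4]} as a preliminary, so there is nothing in the paper to compare your argument against. Your integrating-factor approach is the standard one and is carried out correctly up to the penultimate step: multiplying the rearranged inequality by $e_{\alpha}(t,t_{0})>0$, recognising $(y\,e_{\alpha}(\cdot,t_{0}))^{\Delta}$ via the product rule, integrating, and dividing are all fine.

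The one genuine slip is in the final identification. On a general time scale one has
\[
\frac{1}{e_{\alpha}(t,t_{0})}=e_{\ominus\alpha}(t,t_{0}),\qquad \ominus\alpha=\frac{-\alpha}{1+\mu(\cdot)\alpha},
\]
and $e_{\ominus\alpha}(t,t_{0})$ coincides with $e_{-\alpha}(t,t_{0})$ only when $\mu\equiv 0$ (that is, $\mathbb{T}=\mathbb{R}$). What your computation actually establishes is
\[
y(t)\ \geq(\leq)\ \frac{b}{\alpha}\Bigl[1+\Bigl(\frac{\alpha y(t_{0})}{b}-1\Bigr)e_{\ominus\alpha}(t,t_{0})\Bigr],
\]
which is the exact solution of the reference equation $w^{\Delta}=b-\alpha w^{\sigma}$ and the form in which this comparison result is normally stated. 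Whether the mismatch reflects an imprecision in the lemma as transcribed here (the hypothesis $-\alpha\in\mathcal{R}^{+}$ does suggest the authors intend the additive $-\alpha$, yet then the displayed right-hand side is not the solution of the associated linear equation) or simply a lapse in your last sentence, you should not assert $1/e_{\alpha}=e_{-\alpha}$ without comment; at minimum, write $e_{\ominus\alpha}$ and, if needed, note that for the applications in the paper both exponentials tend to zero so the distinction is harmless there.
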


\begin{definition}[See \cite{[b9]}]
A time scale $ \mathbb{T}$ is called an almost periodic time scale if
$$
\Pi= \left\lbrace \tau \in \mathbb{R}: \, t+\tau \in \mathbb{T}, \,
\text{for} \, \text{all} \, \, t \in \mathbb{T}\right\rbrace
\neq \left\lbrace 0 \right\rbrace.
$$
\end{definition}

\begin{definition}[See \cite{[b9]}]
Let $ \mathbb{T} $ be an almost periodic time scale.
A function $ x \in C( \mathbb{T},\mathbb{R}^{n})$
is called an almost periodic function if the
$\varepsilon$-translation set of $x$, that is,
$$
E\left\lbrace \varepsilon, x\right\rbrace =\left\lbrace \tau \in \Pi
: \, \left| x(t+\tau)-x(t)\right| <\varepsilon \, \text{ for} \, \text{all}
\, \,  t \in \mathbb{T} \right\rbrace,
$$
is a relatively dense set in $\mathbb{T}$
for all $\varepsilon >0$ and there exists a constant $l(\varepsilon)>0$
such that each interval of length $l(\varepsilon)$ contains a
$\tau \in E \left\lbrace \varepsilon,x\right\rbrace$
for which $ \lvert x(t+\tau)-x(t)\lvert < \varepsilon$
for all $ t \in \mathbb{T}$.
The value $\tau $ is known as the $\varepsilon$-translation number of $x$
and $l(\varepsilon)$ is called the inclusion length of
$E\left\lbrace \varepsilon,x\right\rbrace$.
\end{definition}

\begin{definition}[See \cite{[b9]}]
Let $\mathbb{D}$ be an open set in $\mathbb{R}^{n}$
and let $\mathbb{T}$ be a positive almost periodic time scale.
A function $f \in C(\mathbb{T}\times \mathbb{D},\mathbb{R}^{n})$
is called an almost periodic function in $t \in \mathbb{T}$
uniformly for $x \in \mathbb{D}$ if the $ \varepsilon$-translation set of $f$,
$$
E \left\lbrace \varepsilon, f, \mathbb{S}\right\rbrace
= \left\lbrace \tau \in \Pi: \mid f(t+\tau)-f(t)\mid <\varepsilon, \,
\text{ for } \, \text{ all } \, \, (t,x)\in \mathbb{T}\times \mathbb{S}
\right\rbrace,
$$
is a relatively dense set in $\mathbb{T}$ for all $\varepsilon>0$
and, for each compact subset $\mathbb{S}$ of $ \mathbb{D}$,
that is, for any given $ \varepsilon>0 $ and each compact subset
$\mathbb{S}$ of $\mathbb{D}$, there exists a constant
$l(\varepsilon,\mathbb{S})>0$ such that each interval
of length $l(\varepsilon,\mathbb{S})$
contains $\tau(\varepsilon,\mathbb{S})
\in E\left\lbrace \varepsilon, f,\mathbb{S}\right\rbrace$
for which
$$
\mid f(t+\tau,x)-f(t,x)\mid < \varepsilon, \, \,
\text{ for } \, \text{ all } \, \,
(t,x)\in \mathbb{T} \times \mathbb{S}.
$$
\end{definition}

Consider a system
\begin{equation}
\label{eq:h}
x^{\Delta}(t)=h(t,x),
\end{equation}
where
$h: \mathbb{T}^{+} \times \mathbb{S}_{B}\longrightarrow \mathbb{R}^{n},
\; \mathbb{S}_{B}= \left\lbrace
x \in \mathbb{R}^{n} : \, \| x \| < B \right\rbrace$
and $h(t,x)$ is almost periodic in $t$ uniformly for $x \in \mathbb{S}_{B}$
and is continuous in $x$. In \cite{[b18]} the question of existence of a
unique almost periodic solution $ f(t) \in \mathbb{S}$ of (\ref{eq:h}),
which is uniformly asymptotically stable, is investigated. For our
model, we obtain from \cite{[b18]} the following result.

\begin{lemma}[cf. \cite{[b18]}]
\label{24}
Suppose that there exists a Lyapunov function $V(t,x,z) $ defined on
$\mathbb{T}^{+} \times \mathbb{S}_{B}\times \mathbb{S}_{B}$
satisfying the following conditions:
\begin{eqnarray*}
&\text{(i)}& a(\| x-z\|) \leq V(t,x,z)\leq b(\| x-z \|), \;
\text{ where } \; a,b \in \mathbb{K}\\
& &\text{ with } \mathbb{K}=\left\lbrace
\alpha \in C(\mathbb{R}^{+}, \mathbb{R}^{+}): \alpha(0)=0
\text{ and } \; \alpha \; \text{ is increasing} \right\rbrace; \\
&\text{(ii)}& | V(t,x,z)-V(t,x_{1},z_{1}) |
\leq L (\| x-x_{1}\| +\| z-z_{1}\|), \;
\text{ where } \; L >0 \, \text{ is a constant};\\
&\text{(iii)}& D^{+}V^{\Delta}(t,x,z)\leq -c V(t,x,z), \;
\text{ where } \; c>0, \ -c \in \mathcal{R^{+}},
\end{eqnarray*}
and $D^{+}V^{\Delta}$ is the Dini derivative of $V$.
Furthermore, if there exists a solution $x(t) \in \mathbb{S}$ of system
\begin{equation}
\label{1.1}
\begin{cases}
x^{\Delta}_{1}(t)=\Lambda+\omega nx_{4}(t)-[\lambda(t)(1-p)+\phi p+\gamma]x_{1}^{\sigma}(t),\\
x^{\Delta}_{2}(t)=\lambda(t)(1-p)x_{1}(t)-[q\nu+\gamma]x_{2}^{\sigma}(t),\\
x^{\Delta}_{3}(t)=q\nu x_{2}(t)-[\delta_{1}+\gamma]x_{3}^{\sigma}(t),\\
x^{\Delta}_{4}(t)=\phi px_{1}(t)+\delta_{1}f_{1}x_{3}(t)
+\delta_{2}(1-f_{2}-f_{3})x_{5}(t)-[\omega n+\gamma]x_{4}^{\sigma}(t),\\
x^{\Delta}_{5}(t)=\delta_{1}(1-f_{1})x_{3}(t)+\eta(1-k)x_{6}(t)
-[\delta_{2}(1-f_{2}-f_{3})+\delta_{2}f_{2}+\alpha_{1}f_{3}+\gamma]x_{5}^{\sigma}(t),\\
x^{\Delta}_{6}(t)=\delta_{2}f_{2}x_{5}(t)-[\eta(1-k)+\alpha_{2}k+\gamma]x_{6}^{\sigma}(t),
\end{cases}
\end{equation}
for $t\in \mathbb{T^{+}}$, where $ \mathbb{S}\cup \mathbb{S}_{B}$ is a compact set,
then there exists a unique almost periodic solution $ f(t) \in \mathbb{S}$ of system (\ref{1.1}),
which is uniformly asymptotically stable.
\end{lemma}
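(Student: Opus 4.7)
The plan is to view Lemma~\ref{24} as a direct specialization of the abstract existence/stability theorem in \cite{[b18]} to the concrete system (\ref{1.1}), so the work naturally splits into three tasks: (a) casting (\ref{1.1}) into the abstract form (\ref{eq:h}) and checking its structural hypotheses, (b) using conditions (i)--(iii) to obtain uniform asymptotic stability of any two solutions that lie in $\mathbb{S}$, and (c) constructing the almost periodic solution from the given bounded trajectory $x(t)\in\mathbb{S}$ via a translation-sequence argument.

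For step (a), I would write the right-hand side of (\ref{1.1}) as a function $h(t,x)$ that is affine in $x$ with coefficients depending on $t$ only through $\lambda(t)$ and the constant parameters of the model. Since $\lambda$ is bounded positive and almost periodic in $t$, and all remaining coefficients are constant, $h$ is Lipschitz in $x$ on $\mathbb{S}_{B}$ (hence continuous) and almost periodic in $t$ uniformly for $x$ in compact subsets of $\mathbb{S}_{B}$. By hypothesis, $\mathbb{S}\cup\mathbb{S}_{B}$ is a compact set, so all needed uniform bounds on $h$ and on any solution that stays in $\mathbb{S}$ are available.

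For step (b), let $x(t)$ and $z(t)$ be two solutions of (\ref{1.1}) lying in $\mathbb{S}$. Condition (iii) gives $D^{+}V^{\Delta}(t,x(t),z(t))\leq -cV(t,x(t),z(t))$, which integrates via the standard time-scale exponential comparison (Lemma~\ref{23} applied to the scalar function $V$) to $V(t,x(t),z(t))\leq V(t_{0},x(t_{0}),z(t_{0}))\,e_{-c}(t,t_{0})$. Combining with (i) then yields $\| x(t)-z(t)\|\leq a^{-1}\bigl(b(\|x(t_{0})-z(t_{0})\|)\,e_{-c}(t,t_{0})\bigr)$, proving uniform asymptotic stability of every solution of (\ref{1.1}) taking values in $\mathbb{S}$, and in particular uniqueness of an almost periodic solution once one exists.

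For step (c), fix the assumed bounded solution $x(t)\in\mathbb{S}$ and choose a sequence $\{\tau_{n}\}\subset \Pi$ of $\varepsilon_{n}$-translation numbers of $h$ with $\varepsilon_{n}\to 0$. The translates $x_{n}(t):=x(t+\tau_{n})$ are uniformly bounded, and their $\Delta$-derivatives are uniformly bounded (since $h$ is bounded on compacts), giving equicontinuity; an Arzelà--Ascoli and diagonalization argument on compact subsets of $\mathbb{T}$ produces a subsequence converging to some $f(t)\in\mathbb{S}$. Passing to the limit in the $\Delta$-integral version of (\ref{1.1}), using continuity of $h$ in $x$ and the translation property of $h$, shows that $f$ solves (\ref{1.1}). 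The Lipschitz estimate (ii) together with the exponential contraction of step (b) forces $f$ to be almost periodic, and uniqueness among almost periodic solutions in $\mathbb{S}$ is again immediate from (b). The main technical obstacle is precisely this translation-and-limit passage: one must verify, in the time-scale setting, that the limit is still a $\Delta$-solution, that almost periodicity is inherited from the equation through the Lyapunov contraction, and that $f$ remains in $\mathbb{S}$; all of these are the contributions packaged by the framework of \cite{[b18]}, which is what allows the proof to be completed by direct citation once the verifications (a)--(b) above have been carried out.
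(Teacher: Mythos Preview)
The paper does not prove this lemma at all: it is stated in the Preliminaries section with the attribution ``cf.\ \cite{[b18]}'' and is used as a black box in the proof of Theorem~\ref{impth}. So there is no ``paper's own proof'' to compare your proposal against; the authors simply cite the abstract result of Zhang and Li and then verify its hypotheses for the specific system~(\ref{1.1}).

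That said, your outline is a fair reconstruction of the standard argument behind such Lyapunov-type almost-periodicity theorems on time scales, and it correctly identifies the three ingredients (casting the system in the form~(\ref{eq:h}), the exponential decay of $V$ along pairs of solutions via condition~(iii), and the translation/compactness construction of the almost periodic solution). One small quibble: in step~(b) you invoke Lemma~\ref{23} for the scalar inequality $D^{+}V^{\Delta}\le -cV$, but Lemma~\ref{23} is stated for inequalities of the form $y^{\Delta}\ge(\le)\,b-\alpha y^{\sigma}$, not $y^{\Delta}\le -c\,y$; the correct tool here is the Gronwall-type comparison with $e_{-c}(t,t_{0})$ directly, which is indeed what the framework in \cite{[b18]} uses. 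This is a minor mismatch of citations rather than a genuine gap in the argument.
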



\section{Main Results}
\label{sec:3}

Let $t_{0} \in \mathbb{T}$ be a fixed positive initial time.
Our main results are: the proof that system (\ref{1.1}) 
is permanent (Section~\ref{sec:3.1});
a sufficient condition for existence of a solution to system (\ref{1.1})
(Section~\ref{sec:3.2}); and conditions that imply the dynamic system (\ref{1.1})
to have a unique almost periodic solution that is uniformly asymptotically stable	
(Section~\ref{sec:3.3}).


\subsection{Permanence of solutions}
\label{sec:3.1}

We begin by introducing the notion of permanence of solutions.

\begin{definition}
System (\ref{1.1}) is said to be permanent if there exist positive constants $m$ and $M$
such that $ m\leq \liminf_{t \longrightarrow \infty} x_{i}(t)
\leq \limsup_{t \longrightarrow \infty}x_{i}(t)\leq M$, $i=1,2,\ldots,6$,
for any solution $(x_{1}(t),\ldots,x_{6}(t))$ of (\ref{1.1}).
\end{definition}

\begin{theorem}
\label{2}
System (\ref{1.1}) is permanent.
\end{theorem}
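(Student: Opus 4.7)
The plan is to apply Lemma~\ref{23} in its $(\le)$ form to obtain the asymptotic upper bounds and in its $(\ge)$ form to obtain the lower bounds, each time on a scalar differential inequality that isolates a single compartment. A naive component-by-component application is obstructed by the cyclic couplings $x_1\leftrightarrow x_4$ and $x_5\leftrightarrow x_6$ in the upper-bound direction, so I first control the total population $V(t)=\sum_{i=1}^{6}x_i(t)$ and only afterwards cascade through the six equations.

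For the upper bound, I sum the six $\Delta$-equations in (\ref{1.1}). Using the identity $r\,x_j(t)-r\,x_j^\sigma(t)=-r\mu(t)\,x_j^\Delta(t)$ for every internal transfer rate $r$ that appears as $+r\,x_j(t)$ in one line and as $-r\,x_j^\sigma(t)$ in another, all intercompartmental fluxes collapse and one reaches
\begin{equation*}
V^\Delta(t)=\Lambda-\gamma V^\sigma(t)-\alpha_1 f_3\,x_5^\sigma(t)-\alpha_2 k\,x_6^\sigma(t)-\mu(t)R(t),
\end{equation*}
where $R(t)$ is an explicit positive linear combination of the $x_j^\Delta(t)$. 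After dropping the two nonpositive removal terms and back-substituting each $x_j^\Delta$ from (\ref{1.1}) into $R$ (which turns $R$ into a signed combination of the $x_j^\sigma$'s whose negative part reinforces $-\gamma V^\sigma$ while the nonnegative part can be absorbed into the removal terms), one obtains the estimate $V^\Delta(t)\le \Lambda-\gamma V^\sigma(t)$. The $(\le)$-version of Lemma~\ref{23} with $b=\Lambda$ and $\alpha=\gamma$ (valid under the standing assumption $-\gamma\in\mathcal{R}^{+}$) then delivers $\limsup_{t\to\infty}V(t)\le \Lambda/\gamma=:M$, and positivity of each $x_i$ yields $\limsup_{t\to\infty}x_i(t)\le M$ for every $i$.

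For the lower bounds I proceed sequentially, dropping every nonnegative inflow. Using $\omega n\,x_4\ge 0$ and $\lambda\le\lambda^{U}$ in the first equation gives
\begin{equation*}
x_1^\Delta(t)\ge \Lambda-[\lambda^{U}(1-p)+\phi p+\gamma]\,x_1^\sigma(t),
\end{equation*}
so the $(\ge)$-version of Lemma~\ref{23} produces $\liminf x_1(t)\ge m_1:=\Lambda/[\lambda^{U}(1-p)+\phi p+\gamma]>0$. Inserting $m_1$ into the second equation and using $\lambda\ge\lambda^{L}$ yields $\liminf x_2\ge m_2:=\lambda^{L}(1-p)m_1/(q\nu+\gamma)$, and then $\liminf x_3\ge m_3:=q\nu\,m_2/(\delta_1+\gamma)$. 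Discarding $\eta(1-k)x_6\ge 0$ in the fifth equation produces a positive $m_5$ via the same lemma, which feeds the sixth to give $m_6$; finally, keeping only the $\phi p\,x_1$ inflow in the fourth equation gives $m_4:=\phi p\,m_1/(\omega n+\gamma)$. Setting $m:=\min_{i}m_i$ concludes the lower estimate.

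The principal difficulty is the global upper bound: on a general time scale, summing the $\Delta$-equations does not telescope perfectly because $t$ and $\sigma(t)$ are distinct, and the residual $-\mu(t)R(t)$ carries no a priori sign. My strategy for handling it is precisely the back-substitution sketched above, which exploits the fact that each $x_j^\Delta$ is itself a linear combination of the $x_j^\sigma$'s and of nonnegative constants, so that the unfavourable part of $-\mu R$ is dominated by $-\gamma V^\sigma$ and by the strictly negative removal terms already present, preserving exactly the comparison structure needed to apply Lemma~\ref{23}.
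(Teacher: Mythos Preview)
Your lower-bound cascade is correct and is exactly what the paper does: the paper's proof of Theorem~\ref{2} in fact \emph{only} establishes the lower bounds $x_i(t)\ge m_i$ via Lemma~\ref{23}, with the same inequalities you write (the paper keeps the extra term $\delta_1 f_1 m_3$ in $m_4$, but your weaker $m_4=\phi p\,m_1/(\omega n+\gamma)$ works just as well). Upper bounds are not treated inside the paper's proof of Theorem~\ref{2} at all; they are postponed to Lemma~\ref{19}, where the paper simply inserts $\Lambda/\gamma$ as an a priori bound on the coupling terms $x_4,x_5,x_6$ and then cascades componentwise.

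Your upper-bound argument, on the other hand, has a genuine gap. After summing the six equations you correctly obtain
\[
V^\Delta(t)=\Lambda-\gamma V^\sigma(t)-\alpha_1 f_3 x_5^\sigma(t)-\alpha_2 k x_6^\sigma(t)-\mu(t)R(t),
\qquad R(t)=\sum_j c_j\,x_j^\Delta(t),\ c_j>0,
\]
but the claim that back-substituting the $x_j^\Delta$ lets you absorb $-\mu R$ is not right. Each $x_j^\Delta$ has the form $(\text{nonnegative inflows in }x_k(t))-a_j x_j^\sigma(t)$, so $R=P(x(t))-Q(x^\sigma(t))$ with $P,Q\ge 0$, and hence
\[
-\mu(t)R(t)=-\mu(t)P(x(t))+\mu(t)\sum_j c_j a_j\,x_j^\sigma(t).
\]
The last term is nonnegative, involves \emph{all six} components (not only $x_5^\sigma,x_6^\sigma$), and its coefficients $\mu c_j a_j$ are not controlled by $\gamma$ or by the two disease-death terms on a time scale with large graininess. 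So neither ``reinforces $-\gamma V^\sigma$'' nor ``can be absorbed into the removal terms'' is justified, and the inequality $V^\Delta\le\Lambda-\gamma V^\sigma$ does not follow from your sketch. (The description of the back-substitution as yielding ``a signed combination of the $x_j^\sigma$'s'' is also inaccurate: the inflow part $P$ is in the unshifted variables $x_k(t)$, not in $x_k^\sigma$.)

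In short: your lower-bound half coincides with the paper's proof; your upper-bound half takes a different route from the paper's Lemma~\ref{19} and, as written, does not close. If you want to rescue the total-population approach you would need an additional hypothesis on $\mu$ (or a sharper manipulation than the one sketched); the paper avoids the issue by cascading with the bound $\Lambda/\gamma$ inserted directly, which you could adopt instead.
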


\begin{proof}
Let $Z(t)=(x_{1}(t),\ldots,x_{6}(t))$ be a positive solution of system (\ref{1.1}).
Then,
$$
\left\{
\begin{array}{l}
x_{1}^{\Delta}(t)\geq \Lambda -[\lambda^{U}(1-p)+\phi p+\gamma]x_{1}^{\sigma}(t), \\
x_{2}^{\Delta}(t)\geq\lambda^{L}(1-p)m_{1}-(q\nu+\gamma)x_{2}^{\sigma}(t), \\
x_{3}^{\Delta}(t)\geq q\nu m_{2}-(\delta_{1}+\gamma)x_{3}^{\sigma}(t),\\
x_{4}^{\Delta}(t)\geq \phi pm_{1}+\delta_{1}f_{1}m_{3}-(\omega n+\gamma)x_{4}^{\sigma}(t),\\
x_{5}^{\Delta}(t)\geq \delta_{1}(1-f_{1})m_{3}-[\delta_{2}(1-f_{2}-f_{3})
+\delta_{2}f_{2}+\alpha_{1}f_{3}+\gamma]x_{5}^{\sigma}(t),\\
x_{6}^{\Delta}(t)\geq\delta_{2}f_{2}m_{5}-[\eta(1-k)+\alpha_{2}k+\gamma]x^{\sigma}_{6}(t).
\end{array}
\right.
$$
From Lemma~\ref{23}, it follows that
$$
x_{1}(t)\geq \dfrac{\Lambda}{\lambda^{u}(1-p)+\phi p+\gamma}\left[
1+\left( \dfrac{\lambda^{U}(1-p)+\phi p+\gamma}{\Lambda}
x_{1}(t_{0})-1\right) e_{-(\lambda^{U}(1-p)+\phi p+\gamma)}(t,t_{0})\right]
$$
and we have $e_{-(\lambda^{U}(1-p)+\phi p+\gamma)}(t,t_{0})
\longrightarrow 0,\;\; as\;\; t\longrightarrow \infty$.
Thus,
$$
\left\{
\begin{array}{ll}
x_{1}(t)\geq m_{1}:=\dfrac{\Lambda}{\lambda^{U}(1-p)+\phi p+\gamma},
& \text{ for }\, t\geq T_{1}, \\
x_{2}(t)\geq m_{2}:=\dfrac{\lambda^{L}(1-p)m_{1}}{q\nu+\gamma},
& \text{ for }\, t\geq T_{2},\\
x_{3}(t)\geq m_{3}:=\dfrac{q\nu m_{2}}{\delta_{1}+\gamma},
& \text{ for }\,t\geq T_{3},\\
x_{4}(t)\geq m_{4}:=\dfrac{\phi pm_{1}+\delta_{1}f_{1}m_{3}}{\omega n+\gamma},
& \text{ for } \,t\geq T_{4},\\	
x_{5}(t)\geq m_{5}:=\dfrac{\delta_{1}(1-f_{1})m_{3}}{\delta_{2}(1-f_{2}-f_{3})
+\delta_{2}f_{2}+\alpha_{1}f_{3}+\gamma},& \text{ for } \, t\geq T_{5},\\
x_{6}(t)\geq m_{6}:=\dfrac{\delta_{2}f_{2}m_{5}}{\eta(1-k)+\alpha_{2}k+\gamma},
& \text{ for } \,t\geq T_{6}.
\end{array}
\right.
$$
Let $m=\displaystyle\min_{1\leqslant i\leqslant6}{m_{i}}$
and $T=\displaystyle\max_{1\leqslant i\leqslant6}{T_{i}}$.
We can then write that $x_{i}(t)\geq m $ for all $t>T$.	
\end{proof}


\subsection{Existence of solution}
\label{sec:3.2}

For system (\ref{1.1}), we introduce the following assumption:
\begin{itemize}
\item[$(H_{1})$] $\lambda(t)$ is a bounded almost periodic function and satisfy
$0< \lambda^{L}\leq \lambda(t) \leq \lambda^{U}$.
\end{itemize}

To prove existence of solution,
we first begin with a technical lemma.

\begin{lemma}
\label{19}
If $(H1)$ holds, then, for any positive solution
$Z(t)=(x_{1}(t),\ldots,x_{6}(t))$ of system (\ref{1.1}),
there exist positive constants $M$ and $T$
such that $x_{i}(t)<M$, $i=1,\ldots,6$, for all $t>T$.
\end{lemma}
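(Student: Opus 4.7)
The argument is the upper-bound counterpart of the lower-bound cascade carried out in the proof of Theorem~\ref{2}: each component $x_{i}$ will be controlled from above by iterated applications of Lemma~\ref{23} to scalar inequalities of the form $x^{\Delta} \le b - \alpha x^{\sigma}$. The circular coupling of the compartments (in particular the feedback $x_{4} \to x_{1}$ through the term $\omega n x_{4}$ in the first equation of \eqref{1.1}, together with the mutual coupling $x_{5} \leftrightarrow x_{6}$) prevents a naive one-by-one cascade, so I begin with an a priori bound on the total population $N(t) := \sum_{i=1}^{6} x_{i}(t)$.

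Summing the six equations of \eqref{1.1}, the internal transfer terms would telescope exactly in the continuous case ($\mathbb{T}=\mathbb{R}$, $\mu\equiv 0$), leaving $\dot{N} = \Lambda - \gamma N - \alpha_{1}f_{3} x_{5} - \alpha_{2}k x_{6} \le \Lambda - \gamma N$. On a general time scale the cancellation is spoiled by the mismatch $x_{i}(t) - x_{i}^{\sigma}(t) = -\mu(t) x_{i}^{\Delta}(t)$ between gains (evaluated at $t$) and losses (evaluated at $\sigma(t)$). Using $(H_{1})$ in the form $\lambda(t) \le \lambda^{U}$ and the nonnegativity of the remaining parameters, the sum takes the shape
$$
N^{\Delta}(t) \le \Lambda - \gamma N^{\sigma}(t) + \mu(t)\,\mathcal{E}(t),
$$
where $\mathcal{E}(t)$ is a linear combination of the $x_{i}^{\Delta}(t)$ whose coefficients depend only on $\lambda^{U}$ and the model parameters. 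I would close this inequality by absorbing $\mu(t)\mathcal{E}(t)$ into $N^{\Delta}$ on the left-hand side (so the system is solved implicitly for $N^{\Delta}$) to reach
$$
N^{\Delta}(t) \le \Lambda - \tilde{\gamma}\, N^{\sigma}(t),
$$
with $\tilde{\gamma} > 0$ chosen small enough that $-\tilde{\gamma} \in \mathcal{R}^{+}$.

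Lemma~\ref{23} then produces constants $M_{N}, T > 0$ with $N(t) \le M_{N}$ for every $t \ge T$. Since $0 < x_{i}(t) \le N(t)$ for each $i \in \{1,\dots,6\}$, the choice $M := M_{N}$ proves the lemma. If sharper component-wise bounds are desired, one can feed $x_{4}(t) \le M_{N}$ into the first equation to obtain $x_{1}^{\Delta}(t) \le \Lambda + \omega n M_{N} - \gamma x_{1}^{\sigma}(t)$, apply Lemma~\ref{23} again to get $x_{1} \le M_{1}$, and cascade through $x_{2}, x_{3}$ and then the coupled pair $(x_{5}, x_{6})$ (bounding their sum first), closing with $x_{4}$.

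The main obstacle is the treatment of the residual term $\mu(t)\mathcal{E}(t)$. In the ODE case this step is vacuous, but on a general time scale one must verify that the effective decay rate $\tilde{\gamma}$ remains positive and positively regressive after the residual has been absorbed; this is where the structure of $(H_{1})$ and the nonnegativity of all parameters enter essentially. Once this technical step is performed, the rest of the proof is a routine application of the upper-bound half of Lemma~\ref{23}.
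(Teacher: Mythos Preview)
Your route differs from the paper's, and the step you yourself flag as ``the main obstacle'' is indeed a genuine gap. The residual $\mu(t)\mathcal{E}(t)$ is a weighted sum $\sum_i c_i\, x_i^{\Delta}(t)$ with \emph{distinct} nonnegative weights $c_i$ (namely $\lambda(t)(1-p)+\phi p$, $q\nu$, $\delta_1$, $\omega n$, $\delta_2(1-f_3)$, $\eta(1-k)$), so it is not a scalar multiple of $N^{\Delta}(t)$ and cannot simply be ``absorbed into $N^{\Delta}$ on the left-hand side.'' Without an additional hypothesis (e.g.\ a uniform bound on $\mu$ small relative to the transfer rates) there is no way to close the inequality $N^{\Delta}\le \Lambda-\tilde{\gamma}N^{\sigma}$ from the summed system on a general almost periodic time scale. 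Your proof sketch states that ``once this technical step is performed'' the rest is routine---but this is precisely the step that carries all the content, and you have not performed it.

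The paper avoids this difficulty entirely by a more direct cascade: it takes as given the crude a priori bound $x_4,x_5,x_6\le \Lambda/\gamma$ and feeds it straight into the three equations where a feedback term would otherwise obstruct the cascade (the $\omega n x_4$ term in $x_1^{\Delta}$, the $\delta_2(1-f_2-f_3)x_5$ term in $x_4^{\Delta}$, and the $\eta(1-k)x_6$ term in $x_5^{\Delta}$). With those three substitutions the system decouples into a chain $x_1\to x_2\to x_3\to x_4,x_5\to x_6$, and Lemma~\ref{23} is applied six times to produce the explicit constants $M_1,\dots,M_6$. Your optional ``sharper component-wise bounds'' paragraph is essentially this cascade with $M_N$ in place of $\Lambda/\gamma$; the paper simply starts there, using $\Lambda/\gamma$ as the cycle-breaking input rather than deriving a total-population bound first.
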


\begin{proof}
Let $Z(t)=(x_{1}(t),\ldots,x_{6}(t))$
be a positive solution of system (\ref{1.1}). Then,
$$
\left\{
\begin{array}{l}
x_{1}^{\Delta}(t)\leqslant \Lambda +\omega n
\frac{\Lambda}{\gamma}-[\lambda^{L}(1-p)+\phi p+\gamma]x_{1}^{\sigma}(t),\\
x_{2}^{\Delta}(t)\leqslant\lambda^{U}(1-p)M_{1}-(q\nu+\gamma)x_{2}^{\sigma}(t),\\
x_{3}^{\Delta}(t)\leqslant q\nu M_{2}-(\delta_{1}+\gamma)x_{3}^{\sigma}(t),\\
x_{4}^{\Delta}(t)\leqslant \phi pM_{1}+\delta_{1}f_{1}M_{3}+\delta_{2}(1-f_{2}
-f_{3})\frac{\Lambda}{\gamma}-(\omega n+\gamma)x_{4}^{\sigma}(t),\\
x_{5}^{\Delta}(t)\leqslant \delta_{1}(1-f_{1})M_{3}+\eta(1-k)\frac{\Lambda}{\gamma}
-[\delta_{2}(1-f_{2}-f_{3})+\delta_{2}f_{2}+\alpha_{1}f_{3}+\gamma]x_{5}^{\sigma}(t),\\
x_{6}^{\Delta}(t)\leqslant\delta_{2}f_{2}M_{5}-[\eta(1-k)+\alpha_{2}k+\gamma]x_{6}^{\sigma}(t).
\end{array}
\right.
$$
From Lemma~\ref{23},
$$
x_{1}(t)\leqslant \dfrac{\Lambda+\omega n
\frac{\Lambda}{\gamma}}{\lambda^{L}(1-p)
+\phi p+\gamma}\left[ 1+\left( \dfrac{\lambda^{L}(1-p)
+\phi p+\gamma}{\Lambda
+\omega n \frac{\Lambda}{\gamma}}x_{1}(t_{0})-1\right)
e_{-(\lambda^{L}(1-p)+\phi p+\gamma)}(t,t_{0})\right]
$$
and we have $e_{-(\lambda^{L}(1-p)+\phi p+\gamma)}(t,t_{0})
\longrightarrow 0,\;\; as\;\; t\longrightarrow \infty$.
Then,
$$
x_{1}(t)\leqslant M_{1}:= \dfrac{\Lambda+\omega n
\frac{\Lambda}{\gamma}}{\lambda^{L}(1-p)+\phi p+\gamma},
$$
$$
\left\{
\begin{array}{ll}
x_{1}(t)\leqslant M_{1}:= \dfrac{\Lambda+\omega
n \frac{\Lambda}{\gamma}}{\lambda^{L}(1-p)+\phi p+\gamma},
& \text{ for } \,t\geq T_{1}, \\
x_{2}(t)\leqslant M_{2}:=\dfrac{\lambda^{U}(1-p)M_{1}}{q\nu+\gamma},
& \text{ for } \,t\geq T_{2},\\
x_{3}(t)\leqslant M_{3}:=\dfrac{q\nu M_{2}}{\delta_{1}+\gamma},
& \text{ for }\, t\geq T_{3},\\
x_{4}(t)\leqslant M_{4}:=\dfrac{\phi pM_{1}+\delta_{1}f_{1}M_{3}+\delta_{2}(1-f_{2}-f_{3})
\frac{\Lambda}{\gamma}}{\omega n+\gamma},
& \text{ for }\,t\geq T_{4},\\
x_{5}(t)\leqslant M_{5}:=\dfrac{\delta_{1}(1-f_{1})M_{3}+\eta(1-k)
\frac{\Lambda}{\gamma}}{\delta_{2}(1-f_{2}-f_{3})+\delta_{2}f_{2}+\alpha_{1}f_{3}+\gamma},
& \text{ for } \,t\geq T_{5},\\
x_{6}(t)\leqslant M_{6}:=\dfrac{\delta_{2}f_{2}M_{5}}{\eta(1-k)+\alpha_{2}k+\gamma},
& \text{ for } \,t\geq T_{6}.
\end{array}
\right.
$$
Let $M=\displaystyle\max_{1\leqslant i\leqslant6}{M_{i}}$
and $T=\displaystyle\max_{1\leqslant i\leqslant6}{T_{i}}$.
Then $x_{i}(t)\leqslant M $ for all $t>T$.
\end{proof}

Define
\begin{eqnarray*}
\Omega&=&\lbrace (x_{1}(t),\ldots,x_{6}(t)):\,
(x_{1}(t),\ldots,x_{6}(t))\\
& &\textit{ is a solution of (\ref{1.1}) and }\,
0 <m \leq x_{i}\leq M,\;i=1,\ldots,6\rbrace.
\end{eqnarray*}

\begin{theorem}
\label{thm:existence}
If $(H1)$ holds, then the set $\Omega$ is nonempty.
\end{theorem}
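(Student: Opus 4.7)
The plan is to extract, from any positive solution of \eqref{1.1}, a translated limit that lies in the compact box $K := \prod_{i=1}^{6}[m_i,M_i]$ for every $t \in \mathbb{T}^+$ and is itself a solution of the same system. The three ingredients are the permanence result, translation compactness driven by the almost periodicity hypothesis $(H_1)$, and passage to the limit in the $\Delta$-integral formulation.

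I would begin with an arbitrary positive solution $Z(t) = (x_1(t),\ldots,x_6(t))$ of \eqref{1.1} on $[t_0,\infty)_{\mathbb{T}}$; its existence on the whole half-line follows from standard local existence for rd-continuous regressive dynamic systems together with the a priori bounds of Theorem~\ref{2} and Lemma~\ref{19}, which also furnish some $T \geq t_0$ such that $Z(t) \in K$ for every $t \geq T$. Next, using $(H_1)$, I would select a sequence $\tau_k \in \Pi$ with $\tau_k \to +\infty$, $t_0 + \tau_k \geq T$, and such that the translates $\lambda(\cdot + \tau_k)$ converge uniformly on $\mathbb{T}^+$ to $\lambda(\cdot)$ itself, which is possible because $0$ is an accumulation point of every $\varepsilon$-translation set of an almost periodic function. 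Define $Z_k(t) := Z(t+\tau_k)$; each $Z_k$ solves the system obtained from \eqref{1.1} with $\lambda$ replaced by $\lambda(\cdot+\tau_k)$, and by construction $Z_k(t) \in K$ for every $t \in [t_0,\infty)_{\mathbb{T}}$.

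The sequence $\{Z_k\}$ is uniformly bounded in $K$, and a uniform bound on $Z_k^{\Delta}$ is obtained by substituting the box bounds into the right-hand side of \eqref{1.1}. An Arzelà--Ascoli type compactness argument on the time scale then yields, after extraction of a subsequence, a uniform limit $Z^*(t)$ on compact subsets of $[t_0,\infty)_{\mathbb{T}}$ with $Z^*(t) \in K$ throughout. Passing to the limit in the $\Delta$-integral form of \eqref{1.1},
\begin{equation*}
Z_k(t) = Z_k(t_0) + \int_{t_0}^{t} F\bigl(s, Z_k(s), \lambda(s+\tau_k)\bigr)\, \Delta s,
\end{equation*}
where $F$ denotes the right-hand side of \eqref{1.1}, the uniform convergence $Z_k \to Z^*$ together with $\lambda(\cdot+\tau_k) \to \lambda$ identifies $Z^*$ as a solution of \eqref{1.1}. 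Hence $Z^* \in \Omega$ and $\Omega \neq \emptyset$.

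The main obstacle I anticipate is the passage of the $\Delta$-derivative through a uniform limit, which does not work directly on a general time scale. The remedy is to work entirely with the $\Delta$-integral formulation, for which uniform convergence of rd-continuous integrands suffices to pass to the limit, and to note that the rd-continuous class is closed under uniform limits so that $Z^*$ inherits the regularity needed to be a genuine solution. Extraction of the sequence $\tau_k$ with the precise properties required uses only the definition of an almost periodic time scale together with the almost periodicity of $\lambda$ supplied by $(H_1)$.
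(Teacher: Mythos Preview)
Your proposal is correct and follows essentially the same strategy as the paper's proof: translate a given positive solution along a sequence $\tau_k \to +\infty$ of almost-periods of $\lambda$, use the permanence bounds together with an Arzel\`a--Ascoli type compactness argument to extract a limit on compacta, and then pass to the limit in the equations (you do this via the $\Delta$-integral formulation, which is more careful than the paper's direct passage in the differential form) to obtain a solution lying in the box $[m,M]^6$. One minor correction: the existence of the sequence $\tau_k \to +\infty$ with $\lambda(\cdot+\tau_k)\to\lambda$ follows from the \emph{relative density} of the $\varepsilon$-translation sets of $\lambda$, not from $0$ being an accumulation point of them.
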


\begin{proof}
The almost periodicity of $\lambda(t)$ implies that there is a sequence
$\lbrace\xi_{l}\rbrace\subseteq\mathbb{T}^{+}$ with $l\rightarrow\infty$
such that
$$
\lambda(t+\xi_{l})\rightarrow\lambda(t)\;\; \text{as} \;\;l\rightarrow\infty.
$$
From Theorem~\ref{2} and Lemma~\ref{19}, for each sufficiently small $\varepsilon> 0$,
there exists a $t_{1}\in\mathbb{T}^{+}$ such that
$$
m-\varepsilon\leq x_{i}(t)\leq M+\varepsilon,\,
\text{ for all }\,t\geq t_{1},
\quad i=1,\ldots, 6.
$$
Set $x_{il}(t)=x_{i}(t+\xi_{l})$ for $t\geq t_{1}-\xi_{l},\,l=1,2,\ldots$
For any positive integer $k$, we obtain that there exists a sequence
$\{x_{il}(t):\,l\geq k\}$ such that the sequence $\{x_{il}(t)\}$
has a subsequence, denoted by $\{x_{il}^{*}(t)\}$ $(x_{il}^{*}(t)=x_{il}(t+\xi_{l}^{*}))$,
converging on any finite interval of $\mathbb{T}^{+}$ as $l\rightarrow\infty$.
So we have a sequence $\{y_{i}(t)\}$ such that, for $t\in\mathbb{T}^{+}$,
\begin{equation}
\label{e1}
x_{il}^{*}(t)\longrightarrow y_{i}(t),\;
\mbox{ as }l\rightarrow\infty,\;i=1,\ldots, 6.
\end{equation}

It is easy to see that the above sequence $\{\xi_{l}^{*}\}\subseteq\mathbb{T}^{+}$
with $\xi_{l}^{*}\rightarrow\tau$ for $l\rightarrow\infty$ is such that
$$
\lambda(t+\xi_{l}^{*})\longrightarrow\lambda(t),\;
\text{ as }\; l\rightarrow\infty,
$$
which, together with (\ref{e1}) and
$$
\left\{
\begin{array}{l}
x^{*\Delta}_{1l}(t)=\Lambda+\omega nx^{*}_{4l}(t)
-[\lambda(t+\xi_{l}^{*})(1-p)+\phi p+\gamma]x_{1l}^{*\sigma}(t),\\
x^{*\Delta}_{2l}(t)=\lambda(t+\xi_{l}^{*}))(1-p)
x_{1l}^{*}(t)-[q\nu+\gamma]x_{2l}^{*\sigma}(t),\\
x^{*\Delta}_{3l}(t)=q\nu x_{2l}^{*}(t)-[\delta_{1}+\gamma]x_{3l}^{*\sigma}(t),\\
x^{*\Delta}_{4l}(t)=\phi px_{1l}^{*}(t)+\delta_{1}f_{1}
x_{3l}^{*}(t)+\delta_{2}(1-f_{2}-f_{3})x_{5l}^{*}(t)
-[\omega n+\gamma]x^{*\sigma}_{4l}(t),\\
x^{*\Delta}_{5l}(t)=\delta_{1}(1-f_{1})x_{3l}^{*}(t)
+\eta(1-k)x_{6l}^{*}(t)-[\delta_{2}(1-f_{2}-f_{3})
+\delta_{2}f_{2}+\alpha_{1}f_{3}+\gamma]x_{5l}^{*\sigma}(t),\\
x^{*\Delta}_{6l}(t)=\delta_{2}f_{2}x_{5l}^{*}(t)
-[\eta(1-k)+\alpha_{2}k+\gamma]x_{6l}^{*\sigma}(t),
\end{array}
\right.
$$
yields
$$
\left\{
\begin{array}{l}
y^{\Delta}_{1}(t)=\Lambda+\omega ny_{4}(t)
-[\lambda(t)(1-p)+\phi p+\gamma]y_{1}^{\sigma}(t),\\
y^{\Delta}_{2}(t)=\lambda(t)(1-p)y_{1}(t)
-[q\nu+\gamma]y_{2}^{\sigma}(t),\\
y^{\Delta}_{3}(t)=q\nu y_{2}(t)-[\delta_{1}
+\gamma]y_{3}^{\sigma}(t),\\
y^{\Delta}_{4}(t)=\phi py_{1}(t)+\delta_{1}f_{1}y_{3}(t)
+\delta_{2}(1-f_{2}-f_{3})y_{5}(t)-[\omega n+\gamma]y^{\sigma}_{4}(t),\\
y^{\Delta}_{5}(t)=\delta_{1}(1-f_{1})y_{3}(t)+\eta(1-k)y_{6}(t)
-\left[\delta_{2}(1-f_{2}-f_{3})+\delta_{2}f_{2}+\alpha_{1}f_{3}+\gamma\right]
y_{5}^{\sigma}(t),\\
y^{\Delta}_{6}(t)=\delta_{2}f_{2}y_{5}(t)
-\left[\eta(1-k)+\alpha_{2}k+\gamma\right]
y_{6}^{\sigma}(t).
\end{array}
\right.
$$
It is clear that $(y_{1}(t),\ldots,y_{6}(t))$
is a solution of system (\ref{1.1}) and
$$
m-\varepsilon\leq y_{i}(t)\leq M+\varepsilon,\,
\text{ for all }\,t\in\mathbb{T}^{+},
\quad i=1,\ldots, 6.
$$
Since $\varepsilon$ is arbitrary, it follows that
$$
m\leq y_{i}(t)\leq M,\,\text{ for }\,t\in\mathbb{T}^{+},
\quad i=1,\ldots, 6.
$$
The proof is complete.
\end{proof}


\subsection{Uniform asymptotic stability}
\label{sec:3.3}

Now, we establish sufficient conditions
for the existence of a unique positive
almost periodic solution to system (\ref{1.1})
that is uniform asymptotically stable.
We introduce some more notations. Let
\begin{eqnarray*}
A_{1}&:=&\lambda^{L}(1-p)+\phi p+\gamma;
\quad A_{2}:=q\nu+\gamma;\\
A_{3}&:=&\delta_{1}+\gamma;\quad A_{4}:=\omega n+\gamma;\\
A_{5}&:=&\delta_{2}(1-f_{3})+\alpha_{1} f_{3}+\gamma;\quad 
A_{6}:=\eta(1-k)+\alpha_{2}k +\gamma;\\
B_{1}&:=&\lambda^{U}(1-p)+\phi p ;
\quad B_{2}:= q\nu+2\frac{\gamma\beta l_{A}(1-p)M}{\Lambda};\\
B_{3}&:=&\delta_{1}+2\frac{\gamma\beta (1-p)M}{\Lambda};
\quad B_{4}:=\omega n;\\
B_{5}&:=&\delta_{2}(1-f_{3})+2 \frac{\gamma\beta l_{H}(1-p)M}{\Lambda};
\quad B_{6}:=\eta(1-k).
\end{eqnarray*}
Moreover, let $A:=\displaystyle\min_{1\leq i\leq 6} A_{i}$
and $B:=\displaystyle\max_{1\leq i\leq 6}B_{i}$.

In our next result (Theorem~\ref{impth}), 
we assume the following additional hypothesis:
\begin{itemize}
\item[(H2)] $B<A$.
\end{itemize}

\begin{theorem}
\label{impth}
If (H1) and (H2) hold, then the dynamic system (\ref{1.1})
has a unique almost periodic solution
$Z(t)=(x_{1}(t),\ldots,x_{6}(t))\in \Omega$
that is uniformly asymptotically stable.	
\end{theorem}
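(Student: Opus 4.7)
My plan is to invoke Lemma~\ref{24} with the solution supplied by Theorem~\ref{thm:existence}, taking $\mathbb{S}=\Omega$ (which is nonempty by that theorem and bounded away from both the coordinate axes and infinity by Theorem~\ref{2} and Lemma~\ref{19}) and choosing $B>0$ so that $\Omega\subset\mathbb{S}_{B}$. The natural Lyapunov candidate is
\[
V(t,x,z):=\sum_{i=1}^{6}|x_{i}-z_{i}|.
\]
Conditions (i) and (ii) of Lemma~\ref{24} are immediate: $\|x-z\|\leq V(t,x,z)\leq\sqrt{6}\,\|x-z\|$ for the Euclidean norm, so one may take $a(u)=u$ and $b(u)=\sqrt{6}\,u$, and the reverse triangle inequality delivers the Lipschitz estimate with constant $L=1$. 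Everything substantive lies in condition (iii).

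For (iii) I would, along two solutions $x,z$ of (\ref{1.1}), form $x_{i}^{\Delta}-z_{i}^{\Delta}$ component by component and bound $D^{+}|x_{i}^{\sigma}-z_{i}^{\sigma}|^{\Delta}$ from above via the standard time-scale sign trick. On the diagonal this produces $-A_{i}\,|x_{i}^{\sigma}-z_{i}^{\sigma}|$ (the lower bound $\lambda(t)\geq\lambda^{L}$ of (H1) is used exactly once, in the $x_{1}$ equation), while the linear cross-couplings $\omega n$, $q\nu$, $\delta_{1}$, $\phi p$, $\delta_{2}(1-f_{2}-f_{3})$, $\delta_{1}(1-f_{1})$, $\delta_{2}f_{2}$, and $\eta(1-k)$ appear as nonnegative off-diagonal terms. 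The nonlinear term must be expanded as
\[
\lambda_{x}x_{1}^{\sigma}-\lambda_{z}z_{1}^{\sigma}=\lambda_{x}(x_{1}^{\sigma}-z_{1}^{\sigma})+(\lambda_{x}-\lambda_{z})z_{1}^{\sigma},
\]
and $\lambda_{x}-\lambda_{z}$ split further into the \emph{numerator} piece $\beta(l_{A}(x_{2}-z_{2})+(x_{3}-z_{3})+l_{H}(x_{5}-z_{5}))/N_{x}$ and the \emph{denominator} piece $\beta(l_{A}z_{2}+z_{3}+l_{H}z_{5})(N_{z}-N_{x})/(N_{x}N_{z})$. Summing the six equations of (\ref{1.1}) yields the aggregate bound $N^{\Delta}(t)\geq\Lambda-\gamma N^{\sigma}(t)$, to which Lemma~\ref{23} applies and gives the uniform estimate $N(t)\geq\Lambda/\gamma$ for $t$ large enough; combined with $|N_{z}-N_{x}|\leq\sum_{j}|x_{j}-z_{j}|$ and the pointwise bounds from Lemma~\ref{19}, this absorbs the nonlinear contributions into the additional summands $2\gamma\beta l_{A}(1-p)M/\Lambda$, $2\gamma\beta(1-p)M/\Lambda$, and $2\gamma\beta l_{H}(1-p)M/\Lambda$ that appear precisely in $B_{2}$, $B_{3}$, $B_{5}$.

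After re-indexing the sums, the aggregate estimate takes the form
\[
D^{+}V^{\Delta}(t,x,z)\leq-\sum_{i=1}^{6}A_{i}|x_{i}^{\sigma}-z_{i}^{\sigma}|+\sum_{i=1}^{6}B_{i}|x_{i}-z_{i}|\leq-(A-B)\,V(t,x,z),
\]
so hypothesis (H2) gives $c:=A-B>0$; the regressivity $-c\in\mathcal{R}^{+}$ holds automatically when $\mathbb{T}=\mathbb{R}^{+}$ and in general is ensured by the boundedness of the graininess along an almost periodic time scale (shrinking $c$ if necessary). Condition (iii) is then in force, and Lemma~\ref{24} yields the unique, uniformly asymptotically stable almost periodic solution in $\Omega$. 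The principal obstacle I foresee is the careful bookkeeping for the nonlinear $\lambda(t)$ term: each of the numerator and denominator contributions must be bounded \emph{separately} by $\gamma\beta l_{A}(1-p)M/\Lambda\cdot|x_{j}-z_{j}|$ and its analogues, and only when both are added to the direct linear coupling does one recover exactly the factor of two appearing in $B_{2}$, $B_{3}$, $B_{5}$; everything else is routine manipulation in the time-scale calculus.
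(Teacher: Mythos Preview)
Your overall strategy---the $\ell^{1}$ Lyapunov function, the verification of (i) and (ii), and the component-by-component sign estimate leading to
\[
D^{+}V^{\Delta}(t)\;\leq\;-\sum_{i=1}^{6}A_{i}\,|x_{i}^{\sigma}-z_{i}^{\sigma}|\;+\;\sum_{i=1}^{6}B_{i}\,|x_{i}-z_{i}|
\]
---matches the paper's proof exactly. The gap is in the very last step. From the display above you obtain only
\[
D^{+}V^{\Delta}(t)\;\leq\;-A\,V(\sigma(t))+B\,V(t),
\]
and on a general time scale $V(\sigma(t))\neq V(t)$; the inequality $-A\,V^{\sigma}+B\,V\leq-(A-B)V$ would require $V^{\sigma}\geq V$, which is false whenever $V$ is decreasing at a right-scattered point. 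So your conclusion $D^{+}V^{\Delta}\leq-(A-B)V$ with $c=A-B$ is unjustified, and ``shrinking $c$'' cannot repair an inequality that need not hold.

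The paper closes this gap with the elementary time-scale identity $V^{\sigma}=V+\mu\,V^{\Delta}$: substituting gives
\[
D^{+}V^{\Delta}(t)\;\leq\;(B-A)V(t)-A\,\mu(t)\,D^{+}V^{\Delta}(t),
\]
hence $(1+A\mu(t))\,D^{+}V^{\Delta}(t)\leq(B-A)V(t)$ and therefore $D^{+}V^{\Delta}(t)\leq-\psi\,V(t)$ with $\psi=(A-B)/(1+A\mu^{U})$. This choice also makes the regressivity check concrete: $1-\psi\mu(t)=\bigl(1+A(\mu^{U}-\mu(t))+B\mu(t)\bigr)/(1+A\mu^{U})>0$, so $-\psi\in\mathcal{R}^{+}$ without any ad hoc shrinking.

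A minor remark on bookkeeping: in the paper the factor $2$ in $B_{2},B_{3},B_{5}$ arises because the term $(1-p)M\,|\lambda-\widehat{\lambda}|$ occurs once in the estimate for $D^{+}V_{1}^{\Delta}$ and once in the estimate for $D^{+}V_{2}^{\Delta}$ (the force of infection enters both the $x_{1}$ and $x_{2}$ equations), not from a numerator/denominator split within a single equation. Your alternative decomposition of $\lambda_{x}-\lambda_{z}$ is more careful about the $N$-dependence, but you should check that it really reproduces the stated constants $B_{2},B_{3},B_{5}$ rather than a different (possibly larger) set of coefficients.
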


\begin{proof}
According to Theorem~\ref{2}, every solution $Z(t)=(x_{1}(t),\ldots,x_{6}(t))$
of system (\ref{1.1}) satisfies $x_{i}^{L}\leq x_{i}\leq x_{i}^{U}$. Hence,
$\left\vert x_{i}(t)\right\vert\leq K_{i},\;\;i=1,\ldots, 6$.
Suppose that $Z(t)=(x_{1}(t),\ldots,x_{6}(t))$
and $\widehat{Z}(t)=(\widehat{x}_{1}(t),\ldots,\widehat{x}_{6}(t))$
are two positive solutions of system (\ref{1.1}). We have
$$
\left\{
\begin{array}{l}
x^{\Delta}_{1}(t)=\Lambda+\omega nx_{4}(t)-[\lambda(t)(1-p)+\phi p+\gamma]x_{1}^{\sigma}(t),\\
x^{\Delta}_{2}(t)=\lambda(t)(1-p)x_{1}(t)-[q\nu+\gamma]x_{2}^{\sigma}(t),\\
x^{\Delta}_{3}(t)=q\nu x_{2}(t)-[\delta_{1}+\gamma]x_{3}^{\sigma}(t),\\
x^{\Delta}_{4}(t)=\phi px_{1}(t)+\delta_{1}f_{1}x_{3}(t)+\delta_{2}(1-f_{2}
-f_{3})x_{5}(t)-[\omega n+\gamma]x^{\sigma}_{4}(t),\\
x^{\Delta}_{5}(t)=\delta_{1}(1-f_{1})x_{3}(t)+\eta(1-k)x_{6}(t)-[\delta_{2}(1
-f_{2}-f_{3})+\delta_{2}f_{2}+\alpha_{1}f_{3}+\gamma]x_{5}^{\sigma}(t),\\
x^{\Delta}_{6}(t)=\delta_{2}f_{2}x_{5}(t)-[\eta(1-k)+\alpha_{2}k+\gamma]x_{6}^{\sigma}(t),
\end{array}
\right.
$$
and
$$
\left\{
\begin{array}{l}
\widehat{x}^{\Delta}_{1}(t)=\Lambda+\omega n\widehat{x}_{4}(t)
-[\widehat{\lambda}(t)(1-p)+\phi p+\gamma]\widehat{x}_{1}^{\sigma}(t), \\
\widehat{x}^{\Delta}_{2}(t)=\widehat{\lambda}(t)(1-p)\widehat{x}_{1}(t)
-[q\nu+\gamma]\widehat{x}_{2}^{\sigma}(t),\\
\widehat{x}^{\Delta}_{3}(t)=q\nu \widehat{x}_{2}(t)
-[\delta_{1}+\gamma]\widehat{x}^{\sigma}_{3}(t),\\
\widehat{x}^{\Delta}_{4}(t)=\phi p\widehat{x}_{1}(t)+\delta_{1}
f_{1}\widehat{x}_{3}(t)+\delta_{2}(1-f_{2}-f_{3})
\widehat{x}_{5}(t)-[\omega n+\gamma]\widehat{x}_{4}^{\sigma}(t),\\
\widehat{x}^{\Delta}_{5}(t)=\delta_{1}(1-f_{1})\widehat{x}_{3}(t)+\eta(1-k)
\widehat{x}_{6}(t)-[\delta_{2}(1-f_{2}-f_{3})+\delta_{2}f_{2}
+\alpha_{1}f_{3}+\gamma]\widehat{x}_{5}^{\sigma}(t),\\
\widehat{x}^{\Delta}_{6}(t)=\delta_{2}f_{2}\widehat{x}_{5}(t)
-[\eta(1-k)+\alpha_{2}k+\gamma]\widehat{x}_{6}^{\sigma}(t).
\end{array}
\right.
$$
Denote
$$
\left\Vert Z\right\Vert=\left\Vert (x_{1}(t),\ldots,x_{6}(t)) \right\Vert
=\displaystyle\sup_{t\in\mathbb{T}^{+}}
\displaystyle\sum_{i=1}^{6}\left\vert x_{i}(t)\right\vert.
$$
Then
$\left\Vert Z\right\Vert\leq K$ and $\left\Vert \widehat{Z}\right\Vert\leq K$
where $K=\displaystyle\sum_{i=1}^{6}K_{i}$.
Define the Lyapunov function $V(t,Z,\widehat{Z})$
on $\mathbb{T}^{+}\times\Omega\times\Omega$ as
\begin{equation}
\label{lyaponov}
V(t,Z,\widehat{Z})=\displaystyle\sum^{6}_{i=1}\left\vert x_{i}(t)
-\widehat{x}_{i}(t)\right\vert=\displaystyle\sum^{6}_{i=1}V_{i}(t),
\end{equation}
where $V_{i}(t)=\left\vert x_{i}(t)-\widehat{x}_{i}(t)\right\vert$.
Then the two norms
$$
\left\Vert Z-\widehat{Z}\right\Vert
=\displaystyle\sum^{6}_{i=1}\left\vert x_{i}(t)
-\widehat{x}_{i}(t) \right\vert
$$
and
$$
\left\Vert Z-\widehat{Z}\right\Vert_{*}=\displaystyle\sup_{t\in\mathbb{R}_{+}}
\left[\displaystyle\sum^{6}_{i=1}\left(x_{i}(t)
-\widehat{x}_{i}(t) \right)^{2}\right]^{\frac{1}{2}}
$$
are equivalent, that is, there exist two constants
$\eta_{1},\,\eta_{2}>0$ such that
$$
\eta_{1}\left\Vert Z-\widehat{Z}\right\Vert_{*}\leq\left\Vert Z
-\widehat{Z}\right\Vert\leq\eta_{2}\left\Vert Z-\widehat{Z}\right\Vert_{*}.
$$
Hence,
$$
\eta_{1}\left\Vert Z-\widehat{Z}\right\Vert_{*}
\leq V(t, Z,\widehat{Z})\leq\eta_{2}\left\Vert Z-\widehat{Z}\right\Vert_{*}.
$$
Let $a,\,b\in C(\mathbb{R}^{+},\mathbb{R}^{+})$, $a(x)=\eta_{1}x$, and $b(x)=\eta_{2} x$.
Then the assumption (i) of Lemma~\ref{24} is satisfied.
On the other hand, we have
\begin{eqnarray*}
\left\vert V(t,Z(t),\widehat{Z}(t))-V(t,Z^{*}(t),\widehat{Z}^{*}(t))\right\vert
&=&\left\vert\displaystyle\sum^{6}_{i=1}\left\vert x_{i}(t)-\widehat{x}_{i}(t)
\right\vert-\displaystyle\sum^{6}_{i=1}\left\vert x_{i}^{*}(t)
-\widehat{x}_{i}^{*}(t)\right\vert\right\vert\\
&\leq&\displaystyle\sum^{6}_{i=1}\left\vert\left\vert x_{i}(t)
-\widehat{x}_{i}(t)\right\vert-\left\vert x_{i}^{*}(t)
-\widehat{x}_{i}^{*}(t)\right\vert\right\vert\\
&\leq& \displaystyle\sum^{6}_{i=1}\left\vert x_{i}(t)-x^{*}_{i}(t)\right\vert
+\displaystyle\sum^{6}_{i=1}\left\vert \widehat{x}_{i}(t)
-\widehat{x}_{i}^{*}(t)\right\vert\\
&\leq&\left\Vert Z-Z^{*}\right\Vert+\left\Vert \widehat{Z}
-\widehat{Z}^{*}\right\Vert,
\end{eqnarray*}
where $L=1$, so condition (ii) of Lemma~\ref{24} is also satisfied.

Now, using Lemma~4.2 of \cite{[wong]}, it follows that
$$
D^{+}V^{\Delta}_{i}(t)\leq sign(x_{i}^{\sigma}(t)
-\widehat{x}_{i}^{\sigma}(t))(x_{i}^{\Delta}(t)
-\widehat{x}_{i}^{\Delta}(t)), \;\; i=1,\ldots,6,
$$
where $D^{+}V^{\Delta}_{i}$ is the Dini derivative
of $V_{i}$. For $i=1$,
\begin{eqnarray*}
D^{+}V^{\Delta}_{1}(t)&\leq&sign(x_{1}^{\sigma}(t)
-\widehat{x}_{1}^{\sigma}(t))(x_{1}^{\Delta}(t)
-\widehat{x}_{1}^{\Delta}(t))\\
&=&sign(x_{1}^{\sigma}(t)-\widehat{x}_{1}^{\sigma}(t))[
\omega n(x_{4}(t)-\widehat{x}_{4}(t))-\lambda(t)(1-p)x_{1}^{\sigma}(t)\\
& &-[\phi p+\gamma](x_{1}^{\sigma}(t)-\widehat{x}^{\sigma}_{1}(t))
+\widehat{\lambda}(t)(1-p)\widehat{x}^{\sigma}_{1}(t)]\\
&=&sign(x_{1}^{\sigma}(t)-\widehat{x}_{1}^{\sigma}(t))[\omega n(x_{4}(t)
-\widehat{x}_{4}(t))-\lambda(t)(1-p)x_{1}^{\sigma}(t)\\
& &-[\phi p+\gamma](x_{1}^{\sigma}(t)-\widehat{x}_{1}^{\sigma}(t))
+\widehat{\lambda}(t)(1-p)\widehat{x}^{\sigma}_{1}(t)
+\lambda(t)(1-p)\widehat{x}^{\sigma}_{1}(t)
-\lambda(t)(1-p)\widehat{x}_{1}^{\sigma}(t)]\\
&=&sign(x_{1}^{\sigma}(t)-\widehat{x}_{1}^{\sigma}(t))
[\omega n(x_{4}(t)-\widehat{x}_{4}(t))\\
& &-[\lambda(t)(1-p)+\phi p+\gamma](x_{1}^{\sigma}(t)-\widehat{x}_{1}^{\sigma}(t))\\
& &+(\widehat{\lambda}(t)-\lambda(t))(1-p)\widehat{x}_{1}^{\sigma}(t)]\\
&=&sign(x_{1}^{\sigma}(t)-\widehat{x}_{1}^{\sigma}(t))[\omega n(x_{4}(t)
-\widehat{x}_{4}(t))-[\lambda(t)(1-p)+\phi p+\gamma]
(x_{1}^{\sigma}(t)-\widehat{x}_{1}^{\sigma}(t))\\
& &+(\widehat{\lambda}(t)-\lambda(t))(1-p)\widehat{x}^{\sigma}_{1}(t)]\\
&\leq&\omega n\left\vert x_{4}(t)-\widehat{x}_{4}(t)\right\vert
-[\lambda^{L}(1-p)+\phi p+\gamma]\left\vert x_{1}^{\sigma}(t)
-\widehat{x}_{1}^{\sigma}(t)\right\vert\\
& &+\frac{\gamma\beta l_{A}(1-p)M}{\Lambda}\left\vert x_{2}(t)
-\widehat{x}_{2}(t)\right\vert+\frac{\gamma\beta (1-p)M}{\Lambda}
\left\vert x_{3}(t)-\widehat{x}_{3}(t)\right\vert\\
& &+\frac{\gamma\beta l_{H}(1-p)M}{\Lambda}\left\vert x_{5}(t)
-\widehat{x}_{5}(t)\right\vert;
\end{eqnarray*}
for $i=2$,
\begin{eqnarray*}
D^{+}V^{\Delta}_{2}(t)
&\leq&sign(x_{2}^{\sigma}(t)-\widehat{x}_{2}^{\sigma}(t))(x_{2}^{\Delta}(t)
-\widehat{x}_{2}^{\Delta}(t))\\
&=&sign(x_{2}^{\sigma}(t)-\widehat{x}_{2}^{\sigma}(t))[\lambda(t)(1-p)x_{1}(t)
-[q\nu+\gamma](x_{2}^{\sigma}(t)-\widehat{x}_{2}^{\sigma}(t))
-\widehat{\lambda}(t)(1-p)\widehat{x}_{1}(t)]\\
&=&sign(x_{2}^{\sigma}(t)-\widehat{x}_{2}^{\sigma}(t))[\lambda(t)(1-p)x_{1}(t)
-[q\nu+\gamma](x_{2}^{\sigma}(t)-\widehat{x}_{2}^{\sigma}(t))\\
& &-\widehat{\lambda}(t)(1-p)\widehat{x}_{1}(t)
+\lambda(t)(1-p)\widehat{x}_{1}(t)-\lambda(t)(1-p)\widehat{x}_{1}(t)]\\
&=&sign(x_{2}^{\sigma}(t)-\widehat{x}_{2}^{\sigma}(t))
[-[q\nu+\gamma](x_{2}^{\sigma}(t)-\widehat{x}_{2}^{\sigma}(t))\\
& &+\lambda(t)(1-p)(x_{1}(t)-\widehat{x}_{1}(t))-(\widehat{\lambda}(t)
-\lambda(t))(1-p)\widehat{x}_{1}(t)]\\
&\leq&\lambda^{U}(1-p)\left\vert x_{1}(t)-\widehat{x}_{1}(t)\right\vert
-[q\nu+\gamma]\left\vert x_{2}^{\sigma}(t)-\widehat{x}_{2}^{\sigma}(t)\right\vert \\
& &+(1-p)M\left\vert\lambda(t)-\widehat{\lambda}(t)\right\vert\\
&\leq&\lambda^{U}(1-p)\left\vert x_{1}(t)-\widehat{x}_{1}(t)\right\vert
-[q\nu+\gamma]\left\vert x_{2}^{\sigma}(t)-\widehat{x}_{2}^{\sigma}(t)\right\vert \\
& &+\frac{\gamma\beta l_{A}(1-p)M}{\Lambda}\left\vert x_{2}(t)-\widehat{x}_{2}(t)\right\vert\\
& &+\frac{\gamma\beta (1-p)M}{\Lambda}\left\vert x_{3}(t)-\widehat{x}_{3}(t)
\right\vert+\frac{\gamma\beta l_{H}(1-p)M}{\Lambda}\left\vert x_{5}(t)-\widehat{x}_{5}(t)\right\vert;
\end{eqnarray*}
for $i=3$,
\begin{eqnarray*}
D^{+}V^{\Delta}_{3}(t)&\leq&sign(x_{3}^{\sigma}(t)
-\widehat{x}_{3}^{\sigma}(t))(x_{3}^{\Delta}(t)-\widehat{x}_{3}^{\Delta}(t))\\
&=&sign(x_{3}^{\sigma}(t)-\widehat{x}_{3}^{\sigma}(t))[q\nu(x_{2}(t)-\widehat{x}_{2}(t))\\
& &-[\delta_{1}+\gamma](x_{3}^{\sigma}(t)-\widehat{x}_{3}^{\sigma}(t))]\\
&\leq&q\nu\left\vert x_{2}(t)-\widehat{x}_{2}(t)\right\vert-[\delta_{1}
+\gamma]\left\vert x_{3}^{\sigma}(t)-\widehat{x}_{3}^{\sigma}(t)\right\vert;
\end{eqnarray*}
for $i=4$,
\begin{eqnarray*}
D^{+}V^{\Delta}_{4}(t)
&\leq&sign(x_{4}^{\sigma}(t)-\widehat{x}_{4}^{\sigma}(t))(x_{4}^{\Delta}(t)
-\widehat{x}_{4}^{\Delta}(t))\\
&=&sign(x_{4}^{\sigma}(t)-\widehat{x}_{4}^{\sigma}(t))[\phi p(x_{1}(t)
-\widehat{x}_{1}(t))+\delta_{1}f_{1}(x_{3}(t)-\widehat{x}_{3}(t))\\
& &+\delta_{2}(1-f_{2}-f_{3})(x_{5}(t)-\widehat{x}_{5}(t))
-[\omega n+\gamma](x_{4}^{\sigma}(t)-\widehat{x}_{4}^{\sigma}(t))]\\
&\leq&\phi p\left\vert x_{1}(t)-\widehat{x}_{1}(t)\right\vert
+\delta_{1}f_{1}\left\vert x_{3}(t)-\widehat{x}_{3}(t)\right\vert\\
& &+\delta_{2}(1-f_{2}-f_{3})\left\vert x_{5}(t)-\widehat{x}_{5}(t)\right\vert
-[\omega n+\gamma]\left\vert x_{4}^{\sigma}(t)-\widehat{x}_{4}^{\sigma}(t)\right\vert;
\end{eqnarray*}
for $i=5$,
\begin{eqnarray*}
D^{+}V^{\Delta}_{5}(t)&\leq&sign(x_{5}^{\sigma}(t)
-\widehat{x}_{5}^{\sigma}(t))(x_{5}^{\Delta}(t)-\widehat{x}_{5}^{\Delta}(t))\\
&=&sign(x_{5}^{\sigma}(t)
-\widehat{x}_{5}^{\sigma}(t))[\delta_{1}(1-f_{1})(x_{3}(t)
-\widehat{x}_{3}(t))+\eta(1-k)(x_{6}(t)-\widehat{x}_{6}(t))\\
& &-[\delta_{2}(1-f_{2}-f_{3})+\delta_{1}f_{1}+\alpha_1 f_{3}
+\gamma](x_{5}^{\sigma}(t)-\widehat{x}_{5}^{\sigma}(t))]\\
&\leq&\delta_{1}(1-f_{1})\left\vert x_{3}(t)-\widehat{x}_{3}(t)\right\vert
+\eta(1-k)\left\vert x_{6}(t)-\widehat{x}_{6}(t)\right\vert\\
& &-[\delta_{2}(1-f_{2}-f_{3})+\delta_{1}f_{1}+\alpha_1 f_{3}
+\gamma]\left\vert x_{5}^{\sigma}(t)-\widehat{x}_{5}^{\sigma}(t)\right\vert;
\end{eqnarray*}
and, finally, for $i=6$,
\begin{eqnarray*}
D^{+}V^{\Delta}_{6}(t)&\leq&sign(x_{6}^{\sigma}(t)
-\widehat{x}_{6}^{\sigma}(t))(x_{6}^{\Delta}(t)-\widehat{x}_{6}^{\Delta}(t))\\
&=&sign(x_{6}^{\sigma}(t)
-\widehat{x}_{6}^{\sigma}(t))[\delta_{2}f_{2}(x_{5}(t)-\widehat{x}_{5}(t))\\
& &-[\eta(1-k)+\alpha_{2}k+\gamma](x_{6}^{\sigma}(t)
-\widehat{x}_{6}^{\sigma}(t)\\
&\leq&\delta_{2}f_{2}\left\vert x_{5}(t)-\widehat{x}_{5}(t)\right\vert
-[\eta(1-k)+\alpha_{2}k +\gamma]\left\vert x_{6}^{\sigma}(t)
-\widehat{x}_{6}^{\sigma}(t)\right\vert.
\end{eqnarray*}
It follows that
\begin{eqnarray*}
D^{+}V^{\Delta}(t)&\leq&\omega n\left\vert x_{4}(t)-\widehat{x}_{4}(t)\right\vert
-[\lambda^{L}(1-p)+\phi p+\gamma]\left\vert x_{1}^{\sigma}(t)
-\widehat{x}_{1}^{\sigma}(t)\right\vert\\
& &+\frac{\gamma\beta l_{A}(1-p)M}{\Lambda}\left\vert x_{2}(t)
-\widehat{x}_{2}(t)\right\vert+\frac{\gamma\beta (1-p)M}{\Lambda}\left\vert x_{3}(t)
-\widehat{x}_{3}(t)\right\vert\\
& &+\frac{\gamma\beta l_{H}(1-p)M}{\Lambda}\left\vert x_{5}(t)
-\widehat{x}_{5}(t)\right\vert+\lambda^{U}(1-p)\left\vert x_{1}(t)
-\widehat{x}_{1}(t)\right\vert\\
& &-[q\nu+\gamma]\left\vert x_{2}^{\sigma}(t)-\widehat{x}_{2}^{\sigma}(t)\right\vert
+\frac{\gamma\beta l_{A}(1-p)M}{\Lambda}\left\vert x_{2}(t)-\widehat{x}_{2}(t)\right\vert\\
& &+\frac{\gamma\beta (1-p)M}{\Lambda}\left\vert x_{3}(t)
-\widehat{x}_{3}(t)\right\vert+\frac{\gamma\beta l_{H}(1-p)M}{\Lambda}\left\vert x_{5}(t)
-\widehat{x}_{5}(t)\right\vert\\
& &+q\nu\left\vert x_{2}(t)-\widehat{x}_{2}(t)\right\vert
-[\delta_{1}+\gamma]\left\vert x_{3}^{\sigma}(t)-\widehat{x}_{3}^{\sigma}(t)\right\vert\\
& &+\phi p\left\vert x_{1}(t)-\widehat{x}_{1}(t)\right\vert
+\delta_{1}f_{1}\left\vert x_{3}(t)-\widehat{x}_{3}(t)\right\vert\\
& &+\delta_{2}(1-f_{2}-f_{3})\left\vert x_{5}(t)-\widehat{x}_{5}(t)\right\vert
-[\omega n+\gamma]\left\vert x_{4}^{\sigma}(t)-\widehat{x}_{4}^{\sigma}(t)\right\vert\\
& &+\delta_{1}(1-f_{1})\left\vert x_{3}(t)-\widehat{x}_{3}(t)\right\vert
+\eta(1-k)\left\vert x_{6}(t)-\widehat{x}_{6}(t)\right\vert\\
& &-[\delta_{2}(1-f_{2}-f_{3})+\delta_{1}f_{1}+\alpha_1 f_{3}
+\gamma]\left\vert x_{5}^{\sigma}(t)-\widehat{x}_{5}^{\sigma}(t)\right\vert\\
& &+\delta_{2}f_{2}\left\vert x_{5}(t)-\widehat{x}_{5}(t)\right\vert
-[\eta(1-k)+\alpha_{2}k +\gamma]\left\vert x_{6}^{\sigma}(t)
-\widehat{x}_{6}^{\sigma}(t)\right\vert\\
&=&-[\lambda(t)(1-p)+\phi p+\gamma]\left\vert x_{1}^{\sigma}(t)
-\widehat{x}_{1}^{\sigma}(t)\right\vert-[q\nu+\gamma]\left\vert x_{2}^{\sigma}(t)
-\widehat{x}_{2}^{\sigma}(t)\right\vert\\
& &-[\delta_{1}+\gamma]\left\vert x_{3}^{\sigma}(t)
-\widehat{x}_{3}^{\sigma}(t)\right\vert-[\omega n+\gamma]\left\vert
x_{4}^{\sigma}(t)-\widehat{x}_{4}^{\sigma}(t)\right\vert\\
& &-[\delta_{2}(1-f_{2}-f_{3})+\delta_{1}f_{1}+\alpha_1 f_{3}+\gamma]
\left\vert x_{5}^{\sigma}(t)-\widehat{x}_{5}^{\sigma}(t)\right\vert\\
& &-[\eta(1-k)+\alpha_{2}k +\gamma]\left\vert x_{6}^{\sigma}(t)
-\widehat{x}_{6}^{\sigma}(t)\right\vert\\
& &+\left\lbrace\lambda^{U}(1-p)+\phi p\right\rbrace\left\vert x_{1}(t)
-\widehat{x}_{1}(t)\right\vert\\
& &+\left\lbrace q\nu+\frac{\gamma\beta l_{A}(1-p)M}{\Lambda}+\frac{\gamma\beta
l_{A}(1-p)M}{\Lambda}\right\rbrace\left\vert x_{2}(t)-\widehat{x}_{2}(t)\right\vert\\
& &+\left\lbrace\delta_{1}f_{1}+\delta_{1}(1-f_{1})
+\frac{\gamma\beta (1-p)M}{\Lambda}+\frac{\gamma\beta (1-p)M}{\Lambda}\right\rbrace
\left\vert x_{3}(t)-\widehat{x}_{3}(t)\right\vert\\
& &+\omega n \left\vert x_{4}(t)-\widehat{x}_{4}(t)\right\vert
+\left\lbrace\delta_{2}(1-f_{2}-f_{3})+\delta_{2}f_{2}
+\frac{\gamma\beta l_{H}(1-p)M}{\Lambda}\right. \\
& &\left. +\frac{\gamma\beta l_{H}(1-p)M}{\Lambda}\right\rbrace\left\vert x_{5}(t)
-\widehat{x}_{5}(t)\right\vert+ \eta(1-k) \left\vert x_{6}(t)-\widehat{x}_{6}(t)\right\vert\\
&=&-A_{1}\left\vert x_{1}^{\sigma}(t)-\widehat{x}_{1}^{\sigma}(t)\right\vert
-A_{2}\left\vert x_{2}^{\sigma}(t)-\widehat{x}_{2}^{\sigma}(t)\right\vert\\
& &-A_{3}\left\vert x_{3}^{\sigma}(t)-\widehat{x}_{3}^{\sigma}(t)\right\vert
-A_{4}\left\vert x_{4}^{\sigma}(t)-\widehat{x}_{4}^{\sigma}(t)\right\vert\\
& &-A_{5}\left\vert x_{5}^{\sigma}(t)-\widehat{x}_{5}^{\sigma}(t)\right\vert
-A_{6}\left\vert x_{6}^{\sigma}(t)-\widehat{x}_{6}^{\sigma}(t)\right\vert\\
& &+B_{1}\left\vert x_{1}(t)-\widehat{x}_{1}(t)\right\vert
+B_{2}\left\vert x_{2}(t)-\widehat{x}_{2}(t)\right\vert\\
& &+B_{3}\left\vert x_{3}(t)-\widehat{x}_{3}(t)\right\vert
+B_{4}\left\vert x_{4}(t)-\widehat{x}_{4}(t)\right\vert\\
& &+B_{5}\left\vert x_{5}(t)-\widehat{x}_{5}(t)\right\vert
+B_{6}\left\vert x_{6}(t)-\widehat{x}_{6}(t)\right\vert\\
&=&-AV(\sigma(t))+BV(t)\\
&=&(B-A)V(t)-A\mu(t)D^{+}V^{\Delta}(t)
\end{eqnarray*}
and $D^{+}V^{\Delta}(t)\leq\frac{B-A}{1+A\mu(t)}V(t)\leq-\psi(t) V(t)$ 
with $\psi=\frac{A-B}{1+A \mu^{U}}$.
By $(H2)$, we have $\psi(t)=\frac{A-B}{1+A\mu^{U}}>0$ 
and $1-\psi\mu(t) =1+A(\mu^{U}-\mu(t))+\mu(t) B >0$. 
Hence,  $-\psi\in\mathcal{R}^{+}$.
Thus, the assumption (iii) of Lemma~\ref{24} 
is satisfied and it follows from
Lemma~\ref{24} that there exists a unique almost
periodic solution $Z(t)=(x_{1}(t),\ldots,x_{6}(t))$
of the dynamic system (\ref{1.1}) that is uniformly
asymptotically stable with $Z(t)\in\Omega$.
\end{proof}

We illustrate our results with an example.

\begin{ex}
\label{ex01}
Based on \cite{[b5]}, let us consider the following system on the time scale 
$\mathbb{T}=\mathbb{Z}_0^{+}$:
\begin{equation}
\label{ex}
\left\{
\begin{array}{l}
x^{\Delta}_{1}(t)=\Lambda+\omega nx_{4}(t)
-[\lambda(t)(1-p)+\phi p+\gamma]x_{1}^{\sigma}(t),\\
x^{\Delta}_{2}(t)=\lambda(t)(1-p)x_{1}(t)-[q\nu+\gamma]x_{2}^{\sigma}(t),\\
x^{\Delta}_{3}(t)=q\nu x_{2}(t)-[\delta_{1}+\gamma]x_{3}^{\sigma}(t),\\
x^{\Delta}_{4}(t)=\phi px_{1}(t)+\delta_{1}f_{1}x_{3}(t)+\delta_{2}(1-f_{2}
-f_{3})x_{5}(t)-[\omega n+\gamma]x^{\sigma}_{4}(t),\\
x^{\Delta}_{5}(t)=\delta_{1}(1-f_{1})x_{3}(t)+\eta(1-k)x_{6}(t)-[\delta_{2}(1
-f_{2}-f_{3})+\delta_{2}f_{2}+\alpha_{1}f_{3}+\gamma]x_{5}^{\sigma}(t),\\
x^{\Delta}_{6}(t)=\delta_{2}f_{2}x_{5}(t)
-[\eta(1-k)+\alpha_{2}k+\gamma]x_{6}^{\sigma}(t),
\end{array}
\right.
\end{equation}
subject to 
$$
x_1(0)=10283785, \quad
x_2(0)=13, \quad
x_3(0)=2, \quad
x_4(0)=0,\quad
x_5(0)=0, \quad 
x_6(0)=0,
$$
where $\Lambda=\frac{22614}{53}$, $\omega=1/31$, $n=0.075$,
$$
\lambda(t)=\dfrac{\beta\left(l_{A}x_2(t)+x_3(t)+l_{H}x_5(t)\right)}{N(t)}
$$
with $\beta=1.93$, $l_A=1$, $l_H=0.1$, and $N(t)=\sum_{i=1}^{6} x_i(t)$,
$p=0.68$, $\phi=1/12$, $\gamma=\frac{47833615}{N_0}$ with $N_0=N(0)$,
$q=0.15$, $\nu=1/15$, $\delta_1=1/3$, $\delta_2=1/3$,
$f_1=0.96$, $f_2=0.21$, $f_3=0.03$, 
$\eta=1/7$, $k = 0.03$, $\alpha_1=1/7$, and $\alpha_2=1/15$.

System \eqref{ex} is permanent with 
$\lambda^{L}= 1.876738171\times10^{-7}$, $\lambda^{U}=1.93$,
$M_1=65.83271997$, $M_2=8.722416333$, $M_3=0.017498412509$, $M_4=4.428264471$,
$M_6=0.02788991356$, $M=\max_{i=1,\ldots,6}(M_i)=M_1$,  
$m_1=58.16800031$, $m_2=7.7068897$, $m_3=0.0154611$,
$m_4=0.7093454$, $m_5=0.0000414$, $m_6=6.0482208\times10^{-7}$,
and $m= \min_{i=1,\ldots,6}(m_i)=m_6$. In addition, the conditions  
of Theorem~\ref{impth} are verified and we have
$$
4.653775371=A>B=4.148857053, \quad 
\psi=0.0505839, \quad 
1-\psi \mu(t)=0.94941603>0.
$$
We conclude that system \eqref{ex} has a unique positive 
almost periodic solution, which is uniformly asymptotic stable.
This is illustrated in Figure~\ref{fig}. 
\begin{figure}
\centering
\begin{subfigure}{0.4\textwidth}
\includegraphics[width=\textwidth]{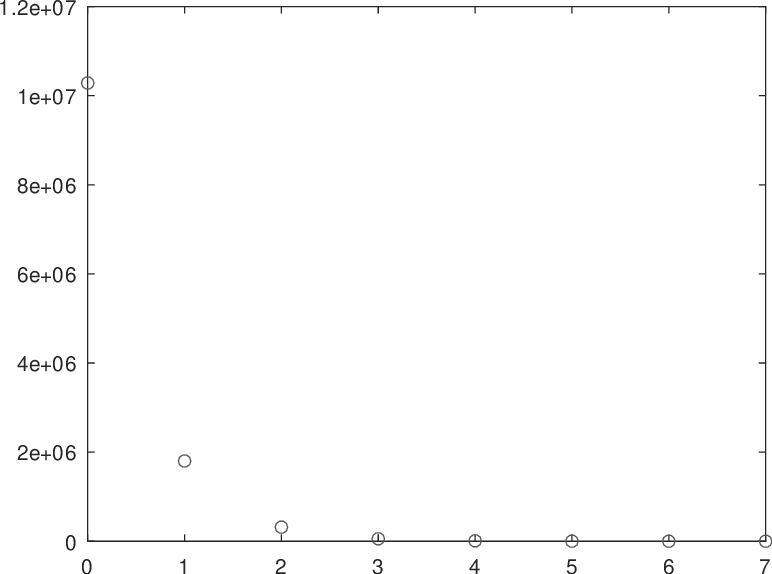}
\caption{Evolution of $x_1(t)$}
\label{fig:x1}
\end{subfigure}
\hfill
\begin{subfigure}{0.4\textwidth}
\includegraphics[width=\textwidth]{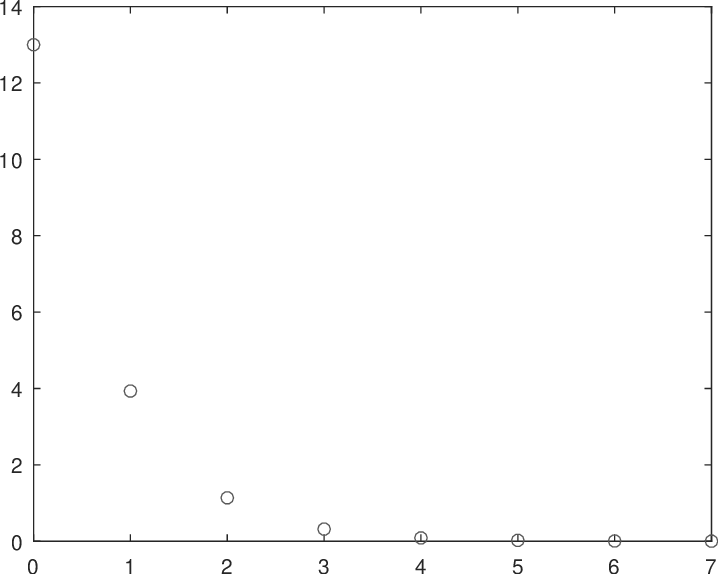}
\caption{Evolution of $x_2(t)$}
\label{fig:x2}
\end{subfigure}
\hfill
\begin{subfigure}{0.4\textwidth}
\includegraphics[width=\textwidth]{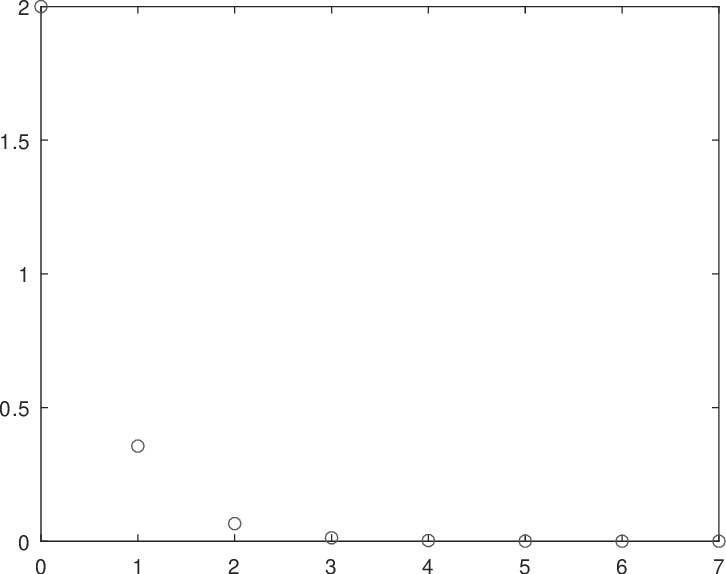}
\caption{Evolution of $x_3(t)$}
\label{fig:x3}
\end{subfigure}
\hfill
\begin{subfigure}{0.4\textwidth}
\includegraphics[width=\textwidth]{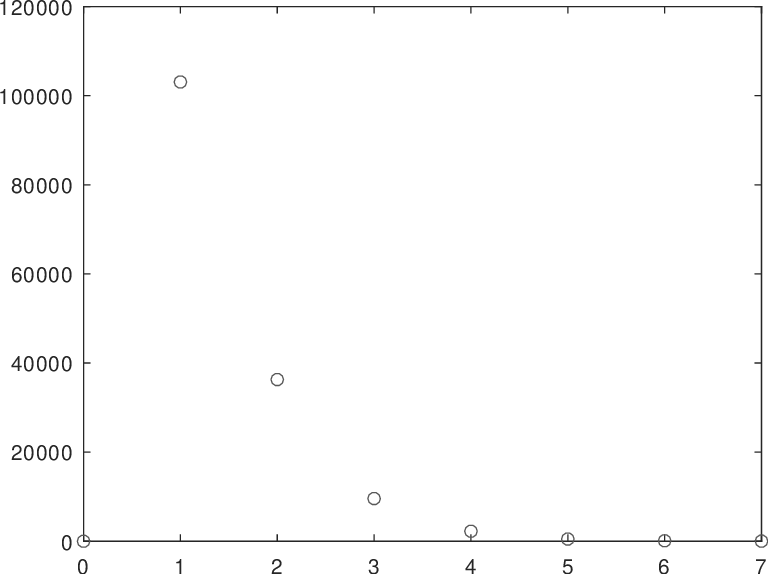}
\caption{Evolution of $x_4(t)$}
\label{fig:x4}
\end{subfigure}
\hfill
\begin{subfigure}{0.4\textwidth}
\includegraphics[width=\textwidth]{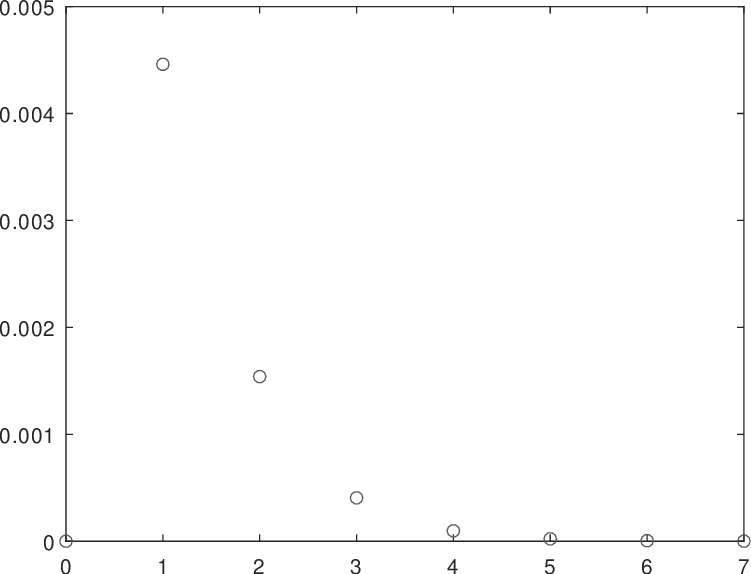}
\caption{Evolution of $x_5(t)$}
\label{fig:x5}
\end{subfigure}
\hfill
\begin{subfigure}{0.4\textwidth}
\includegraphics[width=\textwidth]{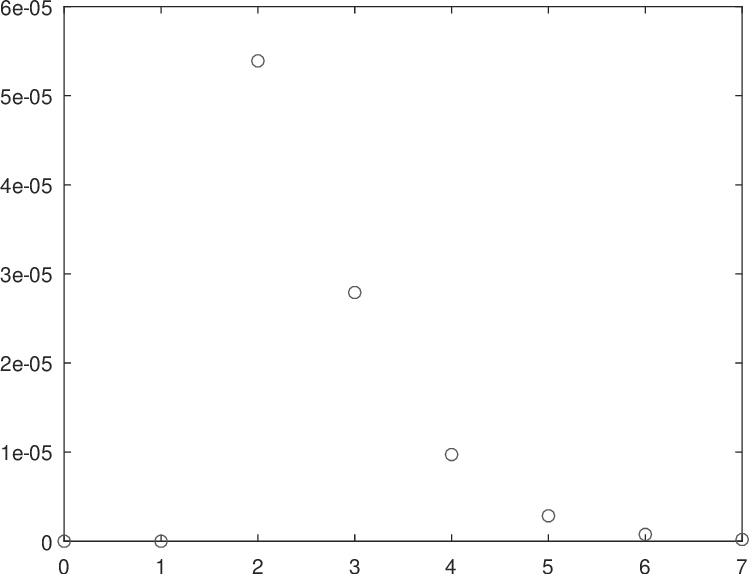}
\caption{Evolution of $x_6(t)$}
\label{fig:x6}
\end{subfigure}
\caption{Example~\ref{ex01}: solution of \eqref{ex} during 7 days.}
\label{fig}
\end{figure}
\end{ex}

\section*{Acknowledgments}

This work is part of first authors' PhD project.
Torres is funded by the Fundação para a Ci\^{e}ncia e a Tecnologia, I.P.
(FCT, Funder ID = 50110000187) under grants UIDB/04106/2020 
and UIDP/04106/2020 and the project 2022.03091.PTDC 
``Mathematical Modelling of Multi-scale Control Systems: 
applications to human diseases'' (CoSysM3), 
financially supported by national funds (OE) 
through FCT/MCTES.



\end{document}